\numberwithin{equation}{section}
\theoremstyle{plain}
\newtheorem{lemma}{Lemma}[section]
\newtheorem{proposition}[lemma]{Proposition}
\newtheorem{theorem}[lemma]{Theorem}
\newtheorem{corollary}[lemma]{Corollary}
\newtheorem{problem}[lemma]{Problem}
\theoremstyle{definition}
\newtheorem{definition}[lemma]{Definition}
\newtheorem*{definition*}{Definition}
\newtheorem{remark}[lemma]{Remark}
\newtheorem{example}[lemma]{Example}
\definecolor{brown}{RGB}{150,100,0}
\definecolor{purple}{RGB}{150,0,100}
\definecolor{grey}{RGB}{128,128,128}
\newcommand{\R}{{\mathbb R}}
\newcommand{\E}{{\mathbb E}}
\newcommand{\N}{{\mathbb N}}
\newcommand{\Aa}{{\mathcal A}}
\newcommand{\Bb}{{\mathcal B}}
\newcommand{\Cc}{{\mathcal C}} 
\newcommand{\Dd}{{\mathcal D}}
\newcommand{\Ff}{{\mathcal F}}
\newcommand{\Gg}{{\mathcal G}} 
\newcommand{\Hh}{{\mathcal H}}
\newcommand{\Mm}{{\mathcal M}} 
\newcommand{\Pp}{{\mathcal P}}
\newcommand{\Ss}{{\mathcal S}}
\newcommand{\Xx}{{\mathcal X}}
\newcommand{\Yy}{{\mathcal Y}}
\newcommand{\Zz}{{\mathcal Z}}
\mathchardef\mhyp="2D
\newcommand{\Om}{{\Omega}}
\newcommand{\om}{{\omega}}
\newcommand{\eps}{{\varepsilon}}
\newcommand{\Meas}{{\bf Meas}}
\def\NABLA#1{{\mathop{\nabla\kern-.5ex\lower1ex\hbox{$#1$}}}}
\def\Nabla#1{\nabla\kern-.5ex{}_#1}
\newcommand{\INTO}{\hookrightarrow}
\renewcommand{\:}{\colon}
\mathchardef\mhyp="2D
\DeclareMathOperator{\Probm}{Probm}
\DeclareMathOperator{\Id}{Id}
\newcommand{\pb}{{\mathbf p}}
\newcommand{\qb}{{\mathbf q}}
\begin{document}
	\title[Bayesian  batch learning  equals  online learning]{Batch learning   equals  online learning in Bayesian supervised learning}
	\author[H. V. L\^e]{H\^ong V\^an L\^e}	
	\address{Institute of Mathematics of the Czech Academy of Sciences, Zitna 25, 11567 Praha 1, Czech Republic}
	
	\email{hvle@math.cas.cz}
	\date{\today}
	\keywords{probabilistic morphism,  conditional independence, sequential Bayesian inversion,  recursive posterior predictive  distribution,    Bayesian supervised learning,  Gaussian process regression,    Dependent  Dirichlet  process}
	\subjclass[2020]{Primary: 62C10, Secondary: 62G05, 62G08}
	
	\begin{abstract}  In this  paper  we study   Bayesian supervised  learning models proposed  by L\^e in \cite{Le2025}. Using functoriality of probabilistic morphisms, we prove that sequential and batch Bayesian inversions coincide in supervised learning models with conditionally independent (possibly non-i.i.d.) data \cite{Le2025}. This equivalence holds without domination or discreteness assumptions on sampling operators. We derive a recursive formula for posterior predictive distributions, which reduces to the Kalman filter in Gaussian process regression. For Souslin label spaces $\Yy$ and arbitrary input sets $\Xx$, we characterize probability measures on $\Pp(\Yy)^\Xx$ via projective systems, generalizing Orbanz \cite{Orbanz2011}. We revisit MacEachern’s dependent Dirichlet processes (DDP) \cite{MacEachern2000} using copula-based constructions \cite{BJQ2012} and show how to compute posterior predictive distributions in universal Bayesian supervised models with DDP priors.
	\end{abstract}

	\maketitle
	
	\section{Introduction}
	For a measurable space $\Xx$,  we  denote  by  $\Sigma_\Xx$ the $\sigma$-algebra of $\Xx$, and by $ \Pp (\Xx)$ the  space  of  all probability measures  on $\Xx$.  If $(\Xx,\tau)$ is a  topological space, we  consider the Borel $\sigma$-algebra $\Bb(\tau)$, denoted also by $\Bb (\Xx)$, on $\Xx$, unless otherwise  stated.
	
	In \cite{Le2025} we considered the following problem.

	\begin{problem}[{\bf Supervised  Bayesian Inference  (SBI) Problem}]\label{prob:transuper} Let $\Xx$  be  an input  space and  $\Yy$    a measurable label  space.  Given  training  data
		$S_n: =\big ((x_1, y_1), \ldots , (x_n, y_n)\big) \in  (\Xx \times \Yy)^n$  and   new  test data $T_m :  = (t_1, \ldots, t_m)\in \Xx^m$,  estimate  the  {\it predictive   probability  measure}   $\Pp_{ T_m|S_n} \in \Pp (\Yy^m)$  that  governs  the joint distribution of  the   $m$-tuple  $ \big(y_1', \ldots, y_m'\big) \in \Yy^m$  where $y_i'$  is the label of $t_i$.   
	\end{problem}

	If $\Xx$  consists  of a single point,   Problem SBI  is equivalent  to   the   fundamental problem of  probability estimation in classical mathematical statistics. If $m =1$  and $\Yy = \R^n$, under the  assumption that  the   distribution of the label  $y$ of $x$ is governed by  a corrupted  measurement of $f(x)$  for some  unknown function $f: \Xx \to \R^n$,   Problem SBI  is the    regression problem in classical statistics.
	
	In  \cite{Le2025}, utilizing  a categorical approach and  stochastic processes  taking values in $\Pp (\Yy)$ with index set $\Xx$, we proposed
	a  Bayesian solution of   Problem  \ref{prob:transuper} encompassing  classical solutions of   probability   and  regression estimation problems that use Bayesian
	inversions.  Our  Bayesian   modeling  (Definition \ref{def:fposteriord})   of   Problem \ref{prob:transuper}   works under  the assumption of  conditionally independent  (possibly  not identically distributed)  data $y \in \Yy$, which  encompasses  the classical  Bayesian   modeling  of   conditionally  i.i.d.  data  $y \in \Yy$, assuming $\#(\Xx)=1$, see also Remark  \ref{rem:uniqueness}(2+3). The classical Bayesian modeling is based  on de Finetti's theorem   on  exchangeable  data and its generalizations.  
	
	In this paper,   we    prove that  batch learning  equals  online  learning  in Bayesian  supervised learning  (Theorems  \ref{thm:binv},  \ref{thm:posspred}, \ref{thm:posspredr}). 
	
	The  question  of whether batch learning  equals online learning  in  Bayesian  learning  has a notable  history and  importance  in mathematical  statistics and machine learning.
	The formal study of updating statistical conclusions one observation at a time (online learning) is known as sequential analysis. The mathematical groundwork for sequential analysis   was  laid by   Abraham Wald \cite{Wald1947}. The explicit formalization of sequential updating in a Bayesian context for estimation problems came to prominence with the development of state-space models and filtering theory \cite{SS2023}.
	Rudolf E. K\'alm\'an is arguably the most important figure in the practical application of this principle. The Kalman Filter, introduced around 1960, is a perfect example of online Bayesian learning. It uses the posterior from the previous time step as the prior for the current time step to recursively estimate the state of a linear dynamic system. While the Kalman filter is a specific algorithm for Gaussian models, its conceptual basis is precisely the equivalence of batch and online updating. The broader theoretical treatment of this idea in Bayesian statistics is often attributed to Dennis V. Lindley  and Adrian F.M. Smith. Their work in the 1970s on Bayesian hierarchical models and the structure of Bayesian inference helped formalize and popularize these recursive computational structures. For instance, their 1972 paper \cite{LS1972}  is a landmark in this area.
	 For   Bayesian models with conjugate  priors, the online approach is simply a recursive way of performing the same computation as the batch approach, breaking it down into  smaller, manageable steps without any loss of information or change in the final inference. To the  best of  our knowledge,  until  now, the most   general  available theorem  stating that batch  Bayesian learning equals   online    Bayesian learning    relies on the  assumption    of the  classical Bayes'  theorem, which  assumes  either discrete  data  or   sampling operators that are dominated  Markov kernels, see, e.g.,  \cite[Section 3.3]{SS2023}. 
	 
	   Our   approach  is based  on   properties of   compositions of  Markov kernels and their graphs, called  functoriality of probabilistic morphisms, which we studied in \cite{Le2025} and
	  our characterization  of Bayesian inversions  as  solutions of   operator  equation  for the graph of a Markov kernel  \eqref{eq:bayesinv0}. Prior results on recursive Bayesian updating require additional structure such as dominated kernels or conjugate priors \cite{SS2023}. 
	  In contrast, we establish the equivalence of batch and online learning using only 
	  conditional independence, functoriality of probabilistic morphisms,  and the 
	  operator equation characterization of Bayesian inversion 
	  developed in \cite{Le2025}, 
	  combined with the functorial structure of sequential inversion 
	  established in Theorem~\ref{thm:binv} of the present work.  As a result,   we can prove the equivalence without the i.i.d. assumption, holding for the more general case of conditionally independent (possibly not identically distributed) data that our model (Definition \ref{def:fposteriord}) addresses.
	  
	The  equivalence    is vital  because  online learning  offers advantages  such as   higher computational  efficiency  and adaptability, see, e.g., Example \ref{ex:Gauss}.

	Our paper is organized as follows.  In  Section \ref{sec:pre}, we  recall   the concept of  probabilistic  morphisms,      their  useful properties,   and    Bayesian  learning models  for supervised  learning (Definition \ref{def:fposteriord}).
	In   Section \ref{sec:bayin}, using  a projective system,  we   derive   a formula  for  Bayesian  inversions of  a universal supervised learning  model $(\Pp (\Yy)^\Xx, \mu, \Id_{\Pp (\Yy) ^\Xx}, \Pp (\Yy)^\Xx)$ if  $\Xx$ is a finite  set (Theorem \ref{thm:projbi}). 
 We illustrate  Theorems \ref{thm:binv} and \ref{thm:projbi} by computing  the  posterior distributions of Dirichlet processes  (Example \ref{ex:dirichleti}).
 	In Section \ref{sec:pred},  we prove  recursive formulas  for  posterior  predictive distributions (Theorems \ref{thm:posspred}, \ref{thm:posspredr}) and  illustrate Theorem \ref{thm:posspredr} with Gaussian process regressions (Example \ref{ex:Gauss}).  Section \ref{sec:bsuperv},   assuming that  $\Yy$ is a Souslin space  and $\Xx$ is an arbitrary set,      characterizes probability measures  on  $\Pp (\Yy) ^\Xx$    via  a  projective  system,  defined  by  finite subsets  in $\Xx$ and   a countable  generating     algebra  of   the $\sigma$-algebra of $\Yy$  (Theorem \ref{thm:uniprior}).  In  Section \ref{sec:BDDP}, we illustrate
	Theorem  \ref{thm:uniprior}  with   MacEachern's Dependent  Dirichlet Processes  (DDP) priors (Theorem \ref{thm:ddp}) and indicate how to compute posterior  predictive distributions  of    universal  Bayesian  supervised  learning models with  DDP priors.    In the last Section \ref{sec:fin},   we   discuss  our results   and  the concept of predictive consistency  in  Bayesian   supervised learning.
	\
	
\section{Preliminaries}\label{sec:pre}

	- For  a   measurable  space $\Xx$,  we  denote by $\Sigma _w   $  the smallest $\sigma$-algebra  on $\Pp (\Xx)$ such that  for  any $A \in \Sigma _\Xx$  the function $e_A: \Pp (\Xx) \to \R, \mu \mapsto    \mu(A), $ is  measurable.    In our paper,  we always  consider  $\Pp (\Xx)$  as  a measurable   space  with    the   $\sigma$-algebra  $\Sigma_w$, unless  otherwise stated.
	
	-  For a measurable space $\Xx$, we denote  by $\Ff_b (\Xx)$  and  $\Ff_s (\Xx)$   the  space of measurable bounded functions and the space of all step functions on $\Xx$, respectively.

	- A Markov  kernel $T: \Xx \times  \Sigma_\Yy \to [0,1]$    is  uniquely defined   by    the measurable map  $ \overline T:  \Xx \to  \Pp (\Yy)$  such that  $\overline T (x) (A) = T (x, A)$ for all $x\in \Xx, A \in \Sigma _\Yy$.   We  shall   also use notations $T ( A|x):= T (x, A) $ and $\overline T(A|x) : = \overline T (x)(A)$.  
	
	- A    {\it probabilistic  morphism}  $T: \Xx \leadsto \Yy$  is an  arrow assigned to a  measurable mapping, denoted  by $\overline T$,  from $ \Xx$ to  $\Pp (\Yy)$.  We say  that  $T$ is generated  by $\overline  T$.  For a measurable mapping $T: \Xx \to \Pp (\Yy)$ we  denote  by $\underline T: \Xx \leadsto  \Yy$ the generated  probabilistic morphism.
	
	-  For   probabilistic  morphisms  $T_{\Yy|\Xx} : \Xx \leadsto \Yy$ and  $T_{\Zz|\Yy}: \Yy \leadsto  \Zz$  their    composition is the  probabilistic  morphism
	$$  T_{\Zz|\Xx} : = T_{\Zz |\Yy} \circ  T _{\Yy|\Xx}: \Xx \leadsto \Zz  $$
	$$	(T_{\Zz|\Yy}\circ T_{\Yy|\Xx}) (x, C): = \int_\Yy T_{\Zz|\Yy} (y ,C) T_{\Yy| \Xx} (dy|x) $$
	for	$x\in \Xx$   and $C \in \Sigma_\Zz$. 
	It is well-known that the composition is associative.

	- We  denote  by $\Meas(\Xx, \Yy)$  the set  of  all measurable mappings from   a measurable  space  $\Xx$ to a measurable space $\Yy$, and by $\Probm (\Xx, \Yy)$  the set  of all  probabilistic  morphisms    from $\Xx$ to $\Yy$.  We regard  $\Meas (\Xx, \Yy)$ as a subset  of $\Probm (\Xx, \Yy)$, identifying  $ \Yy \ni  y $ with the Dirac measure $\delta_y \in \Pp (\Yy)$. This is possible, since the  Dirac map $\Yy \to \Pp (\Yy), y \mapsto \delta_y$, is measurable \cite{Lawvere1962},\cite[Theorem 1]{Giry1982}.

	- We denote  by $\Yy ^\Xx$  the set  of all mappings  from $\Xx$ to $\Yy$.  If $\Yy$  is a  measurable space, then  $\Yy^\Xx$    is a measurable space   with the    cylindrical $\sigma$-algebra  denoted by $\Sigma_{cyl} (\Yy^\Xx)$. 
	
	- For  any $\Xx$ we denote by $\Id_\Xx$ the  identity map on $\Xx$. For a   product   space  $\Xx \times \Yy$ we denote  by $\Pi_\Xx$ the canonical projection   to the    factor  $\Xx$.
	
	- For any $T \in \Probm (\Xx, \Yy)$ the linear  mapping 
	$$ P_* T: \Pp (\Xx)\to \Pp (\Yy),$$ 
	\begin{equation}\label{eq:Mhomomorphism}
		P_* T (\mu)  (B): = \int _\Xx  \overline T(B|x) \, d\mu (x),   \, \mu \in \Pp (\Xx), \, B \in \Sigma_\Yy,
	\end{equation}
	is  measurable \cite{Lawvere1962}, \cite[Theorem 1]{Giry1982}.  If $\Xx$ and $\Yy$ are  Polish spaces, then   $\Pp (\Xx)$   and $\Pp (\Yy)$  are Polish spaces endowed with the  weak*-topology $\tau_w$,  and  their $\sigma$-algebra  $\Sigma_w$ is the Borel $\sigma$-algebra $\Bb (\tau_w)$. Furthermore, $P_* T$  is a  $(\tau_w, \tau_w)$-continuous map \cite[Theorem 1]{Giry1982}.  Moreover, for any $T_1 \in \Probm (\Xx_1, \Xx_2)$, $T_2 \in \Probm (\Xx_2, \Xx_3)$  we have \cite{Lawvere1962}, \cite[Lemma 5.10]{Chentsov72},\cite[Theorem 1]{Giry1982}, \cite[Proposition 5]{JLT2021}
	\begin{equation}\label{eq:giryf}
		P_* (T_2 \circ T_2) = P_* T_2 \circ P_* T_1.
	\end{equation}
	   We  also use  the abbreviation $T_*$   for $P_* T$.
	   
	   - For  $T_i \in
	   \Probm (\Xx_i, \Xx_{i+1})$, $ i = 1, 2$, we have \cite[Lemma 5.5]{Chentsov72}
	   \begin{equation}\label{eq:tcompose}
	   	\overline{T_2 \circ T_1} =  (T_2) _* \circ \overline {T_1}.
	   \end{equation}

	-  For any $k \in \N^+$ the multiplication mapping
	\begin{equation}\label{eq:frakmk}
		\mathfrak m^k:\prod_{i =1}^k\big (\Pp(\Xx_1) , \Sigma_w\big ) \to   \Big (\Pp \big (\prod_{i=1}^k \Xx_i\big ), \Sigma_w \Big ), \,  (\mu_1, \ldots, \mu_k) \mapsto \otimes_{i=1}^k \mu_i
	\end{equation}
	is  measurable \cite[Proposition 2.1 (1)]{Le2025}.

	-  For  a        probabilistic  morphism  $T: \Xx \leadsto \Yy$   the    graph $\Gamma  _T: \Xx \leadsto  \Xx \times  \Yy$ of $T$  is defined as follows:
	$$\overline{\Gamma _T}  (x) : = \mathfrak m ^2 (\overline{\Id_\Xx}, \overline T).$$
	Note that  $\overline{\Id_\Xx} = \delta \circ \Id_\Xx$, where  
	$$\delta:\Xx \to \Pp (\Xx)$$
	 is the  measurable map assigning   $x \in \Xx$ to the Dirac measure $\delta _x$ concentrated at $x$.  For any $\mu \in \Pp (\Xx)$, $A\in \Sigma_\Xx$, $B \in \Sigma_\Yy$  we have \cite[(2.13)]{Le2025}
	 \begin{equation}\label{eq:pgraph}
	 	 (\Gamma_T)_* \mu  (A \times B) = \int _\Xx \overline {\Gamma_T} (x) (A\times  B)d\mu (x) = \int_A \overline  T  (B|x)\, d\mu (x).
	 	\end{equation}
	 By \cite[Lemma 2.10 (1), (2.15)]{Le2025}, we have the following decomposition:
	 \begin{equation}\label{eq:tdecomp}
	 	T = \Pi_\Yy \circ \Gamma_T.
	 	\end{equation}
	
	In \cite[Lemma 2.10(2)]{Le2025}  L\^e  proved  the following   formula for the graph of  a composition  of  probabilistic morphisms  $p_1 : \Xx \leadsto \Yy$  and $p_2 : \Yy \leadsto \Zz$
	\begin{equation}\label{eq:graphcomp}
		\Gamma_{p_2 \circ p_1} = (\Id_\Xx \times p_2 ) \circ \Gamma_{p_1}.
	\end{equation}
	-    A {\it Bayesian statistical model}  is  a   quadruple $(\Theta, \mu_\Theta, \pb, \Xx)$,  where  $(\Theta,\mu_\Theta)$  is a probability space,   and  $\pb\in \Meas\big( \Theta , \Pp(\Xx)\big)$.
	The {\it predictive  distribution} $\mu_\Xx \in \Pp (\Xx)$  of    a    Bayesian statistical model $(\Theta, \mu_\Theta, \pb, \Xx)$ is defined as the {\it prior marginal probability} of  $x$, i.e., $\mu_\Xx : = (\Pi_\Xx)_*\mu$,  where $\mu: = (\Gamma_{\underline \pb})_*\mu_\Theta \in \Pp (\Theta\times \Xx) $ is the joint distribution of $\theta\in \Theta$ and $x\in \Xx$  whose regular conditional probability measure  with respect to the projection $\Pi_\Theta: \Theta \times \Xx \to \Theta$ is $\pb:\Theta \to \Pp(\Xx)$.
	A {\it Bayesian inversion}  $ \qb: =\qb (\cdot \|\pb, \mu_\Theta)\in \Meas\big( \Xx, \Pp (\Theta)\big)$  of a Markov kernel  $\pb\in \Meas\big ( \Theta, \Pp (\Xx)\big )$   relative to  $\mu_\Theta$ is a Markov kernel  such that
	\begin{equation}\label{eq:bayesinv0}
		(\sigma_{\Xx, \Theta})_*	(\Gamma_{\underline \qb})_*\mu_\Xx = (\Gamma_{\underline \pb})_* \mu_\Theta,
	\end{equation}
	where  $\sigma_{\Xx, \Theta}: \Xx\times \Theta \to \Theta\times \Xx$ is defined by $  (x, \theta) \mapsto  (\theta, x)$.
	
	We also write $\qb (\cdot \|\pb, \mu_\Theta)$ as $\qb (\cdot \|\mu_\Theta)$ if $\pb$ is fixed and   no confusion  can occur.

	For $X_m : = (x_1, \ldots, x_m)\in \Xx^m$, we denote by 
	$$E_{X_m}:\Pp (\Yy) ^\Xx \to \Pp (\Yy) ^m, h \mapsto \big (h(x_1),\ldots, h(x_m))\in \Pp (\Yy)^m$$
	the evaluation mapping.
	
	For  $S_n = \big ((x_1, y_1), \ldots , (x_n, y_n)\big) \in  (\Xx \times \Yy)^n$,  we denote  by $\Pi_\Xx (S_n)$  the $\Xx^n$-component  of
	$S_n$, namely  $\Pi_\Xx (S_n) =  (x_1, \ldots,  x_n)\in \Xx^n$. Similarly,  $\Pi_\Yy (S_n) = (y_1, \ldots, y_n)\in \Yy^n$.  Now we    recall the  solution  of the problem SBI
	in \cite[Definition 3.2]{Le2025}.

	\begin{definition}\label{def:fposteriord}  A {\it   Bayesian learning  model for the supervised  inference problem SBI} consists of  a      quadruple $(\Theta, \mu_\Theta, \pb , \Pp (\Yy)^\Xx)$, where  $(\Theta, \mu_\Theta)$ is a probability parameter space,  and $\pb: \Theta \to \Pp (\Yy)^\Xx$  is a   measurable mapping. 
		
	(1)	 For $X_m = (x_1, \ldots, x_m)\in  \Xx^m$, the sampling operator $\pb_{X_m}: = \mathfrak m^m \circ E_{X_m}\circ \pb: \Theta \to \Pp (\Yy^m)$ 
		parameterizes   the sampling distributions  of  $Y_m = (y_1, \ldots, y_m)\in \Yy^m$, where $y_i$ is  a label of $x_i$, with certainty encoded in  $\mu_\Theta$.

		(2) For  a training sample  $S_n\in (\Xx \times \Yy)^n$,  the  {\it posterior  distribution} $ \mu_{\Theta|S_n} \in \Pp (\Theta)$ after  seeing $S_n$ is  the value $ \qb_{\Pi_\Xx (S_n)}\big(\Pi_\Yy  (S_n)\big) $  where  $\qb_{\Pi_\Xx (S_n)}:\Yy^n \to \Pp (\Theta) $  is  a  Bayesian  inversion of  the     Markov kernel $\pb_{\Pi_\Xx (S_n)}:\Theta \to \Pp (\Yy^n)$  relative  to  $\mu_\Theta$.
		
		(3) For $T_m = (t_1, \ldots , t_m) \in \Xx^m$, the {\it posterior  predictive distribution}  $\Pp_{T_m|S_n, \mu_\Theta}\in \Pp (\Yy^m) $ of the $m$-tuple  $(y'_1, \ldots, y'_m)$  where $y'_i$ is the label of $t_i$, given a  training data   set $S_n \in (\Xx \times \Yy)^n$,
		is defined  as the  predictive  distribution  of  the    Bayesian statistical  model $(\Theta,   \mu_{\Theta|S_n},  \pb_{T_m}, \Yy^m)$, i.e., 
		\begin{equation}\label{eq:posteriorsuper}
			 \Pp  _{T_m |  S_n, \mu_\Theta}: = (\underline{\pb_{T_m}})_* \mu_{\Theta|S_n} \in \Pp (\Yy^m).
		\end{equation}
		(4) The aim  of  a learner  is to  estimate  and approximate   the value  of the posterior   predictive    distribution  $\Pp  _{T_m |  S_n, \mu_\Theta}$.
	\end{definition}
	
	\begin{remark}\label{rem:uniqueness} (1)  A Bayesian  inversion  $\qb_{X_n}: \Yy ^n \to \Pp (\Theta)$  of  the     Markov kernel $ \pb_{X_n}:\Theta \to \Pp (\Yy^n)$  relative  to  $\mu_\Theta$ is  defined  	$(\underline{\pb_{X_n}})_*\mu_\Theta$-a.s. uniquely in  weak sense. \footnote{See \cite[Theorem 2.9]{Le2025}, for  the precise  definition. In this paper,  we need only Formula  \eqref{eq:bayesinv0}  for a  characterization  of   a Bayesian  inversion.}   In  Theorem \ref{thm:posspred}  below, we shall show that, given inputs $X_n = (x_1, \ldots, x_n)  \in \Xx^n$,   for each $T_m \in  \Xx^m$, the  posterior  predictive  distribution  
		\begin{equation}\label{eq:posterior}
			\Pp^n_m:   \Yy ^n \to \Pp (\Yy^m), \,  (y_1, \ldots,  y_n) \mapsto \Pp_{T_m|  \big((x_1, y_1),\ldots (x_n, y_n)\big), \mu_\Theta}, 
	\end{equation}
	is  a regular  conditional probability measure  of the joint  distribution\\ $(\underline{\mathfrak m^2 (\pb_{T_m},\pb_{X_n})}_* \mu_\Theta \in \Pp (\Yy ^m \times \Yy^n)$.  Hence,  $\Pp^n_m$ is  defined  	$(\underline{\pb_{X_n}})_*\mu_\Theta$-a.s. uniquely	 in the weak sense,  independently from the choice  of a  Bayesian  inversion $\qb_{X_n}: \Yy ^n \to \Pp (\Theta)$.
	
	(2)  If $\# (\Xx) =1$,   our Bayesian  learning model  is a classical Bayesian  statistical model $\big(\Theta, \mu_\Theta, \pb, \Pp (\Yy)\big)$  for    Bayesian inference  under the assumption of  conditionally i.i.d.  data $y \in \Yy$.
	
	(3) We showed in \cite{Le2025} that classical Bayesian regression  learning is a particular  case  of    Bayesian  supervised  learning  in the sense of Definition \ref{def:fposteriord}, see  Section \ref{sec:pred} and  Theorem \ref{thm:posspredr}.
	
	(4) It is easy to verify, e.g., by using Equation \eqref{eq:marginal1} in the next Section, that  our solution of  Problem SBI satisfies the following  consistency  property.  For any $T_m  =   (t_1, \ldots,  t_m)\in \Xx^m$, $S_n \in  (\Xx \times \Yy)^n$, and    $t_{m+1}\in \Xx$  we have
	\begin{equation}\label{eq:consb}
		\Pp_{T_m|S_n,\mu_\Theta} = (\Pi_{\Yy^m})_* \Pp_{ T_m, t_{m+1}|S_n,\mu_\Theta}.
	\end{equation}
	In particular,  to solve   Problem  SBI  for all $m\in \N^+$, it suffices to solve  the problem for the case $m = \infty$.
	\end{remark}
	
	In \cite[Proposition 3.4]{Le2025}  we  showed that the quadruple   $\big( \Pp (\Yy)^\Xx, \mu,\\ \Id _{\Pp (\Yy) ^\Xx}, \Pp(\Yy)^\Xx\big)$,  where $\mu \in \Pp (\Pp (\Yy)^\Xx)$, is a universal  Bayesian learning model in the sense  of Definition \ref{def:fposteriord}     for solving   the  problem SBI.

	\section{Bayesian inversions  in Bayesian  supervised  learning models}\label{sec:bayin}
	
	In this Section,  first   we  prove  Theorems  \ref{thm:binv} and \ref{thm:projbi} for computing   Bayesian  inversions in supervised learning model.
	We illustrate these theorems with   Example \ref{ex:dirichleti}.

	\begin{theorem}[Online  formula for Bayesian inversion]\label{thm:binv}  Let  $(\Theta, \mu_\Theta, \pb, \Pp (\Yy)^\Xx)$  be a  Bayesian  model  for supervised  learning.   Let $S_n =  \big  (( x_1, y_1), \ldots, (x_n, y_n) \big)\in (\Xx \times \Yy)^n$  and $S_{n-1} = \big  (( x_1, y_1), \ldots, (x_{n-1}, y_{n-1}) \big)$. 
		 Then  a Bayesian inversion $\qb_{\Pi_\Xx (S_n)}(\cdot\| \mu_\Theta): \Yy^n \to \Pp (\Theta)$ of the Markov kernel $\pb_{\Pi_\Xx (S_n)}: \Theta \to \Pp (\Yy^n)$ relative to $\mu_\Theta$ can be found     by the following formula:
		\begin{equation}\label{eq:recursive}
			\qb_{\Pi_\Xx (S_n)} (y_1, \ldots, y_n \| \mu_\Theta):= \qb_{x_n} \bigl(y_n\, \|\, \qb_{ \Pi_\Xx (S_{n-1})} (y_{1}, \ldots, y_{n-1}\|\mu_\Theta )\bigr).	
		\end{equation}
	\end{theorem}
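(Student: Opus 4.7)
The plan is to proceed by induction on $n$, verifying at each step the operator characterization \eqref{eq:bayesinv0} of Bayesian inversion. The base case $n=1$ is definitional. For the inductive step, denote $\pb_n := \pb_{\Pi_\Xx(S_n)}$, $\nu_{n-1} := (\underline{\pb_{n-1}})_*\mu_\Theta$, and write
\[
\tilde\qb(y_n,\ldots,y_1) := \qb_{x_n}\bigl(y_n\,\big\|\,\qb_{\Pi_\Xx(S_{n-1})}(y_{n-1},\ldots,y_1\|\mu_\Theta)\bigr).
\]
The goal is to show $(\sigma_{\Yy^n,\Theta})_*(\Gamma_{\underline{\tilde\qb}})_*(\underline{\pb_n})_*\mu_\Theta = (\Gamma_{\underline{\pb_n}})_*\mu_\Theta$.

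The key ingredient is the product decomposition $\overline{\pb_n}(\theta) = \overline{\pb_{n-1}}(\theta)\otimes\overline{\pb_{x_n}}(\theta)$, which is built into the definition $\pb_n = \mathfrak m^n\circ E_{\Pi_\Xx(S_n)}\circ\pb$ and is exactly the conditional-independence hypothesis of Definition \ref{def:fposteriord}. Combined with formula \eqref{eq:pgraph} and Fubini, the right-hand side above becomes the iterated integral
\[
\mu_\Theta(d\theta)\,\overline{\pb_{n-1}}(\theta)(dy_1\cdots dy_{n-1})\,\overline{\pb_{x_n}}(\theta)(dy_n).
\]
The inductive hypothesis rewrites the first two factors as the disintegration $\qb_{\Pi_\Xx(S_{n-1})}(y_{n-1},\ldots,y_1\|\mu_\Theta)(d\theta)\,\nu_{n-1}(dy_1\cdots dy_{n-1})$. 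For each fixed $(y_1,\ldots,y_{n-1})$, the remaining joint on $\Theta\times\Yy$ is the graph pushforward $(\Gamma_{\underline{\pb_{x_n}}})_*\qb_{\Pi_\Xx(S_{n-1})}(y_{n-1},\ldots,y_1\|\mu_\Theta)$. By the defining property \eqref{eq:bayesinv0} of the single-datum inversion $\qb_{x_n}(\cdot\|\cdot)$, applying $\sigma_{\Yy,\Theta}$ converts this into the graph pushforward of $\underline{\tilde\qb(\cdot,y_{n-1},\ldots,y_1)}$ along the corresponding predictive marginal. Reassembling via a second Fubini and using formulas \eqref{eq:tcompose}, \eqref{eq:giryf}, and \eqref{eq:graphcomp} to glue the nested graphs gives the desired identity.

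The main obstacle is measurability bookkeeping: one must verify that the composite map sending $(y_1,\ldots,y_{n-1})\in\Yy^{n-1}$ first to the measurable family of priors $\qb_{\Pi_\Xx(S_{n-1})}(y_{n-1},\ldots,y_1\|\mu_\Theta)\in\Pp(\Theta)$ and then to $\qb_{x_n}(\cdot\|\cdot)$ yields a jointly measurable Markov kernel $\Yy^n\to\Pp(\Theta)$. This follows from the functorial properties of probabilistic morphisms recorded in \eqref{eq:Mhomomorphism}, \eqref{eq:tcompose}, \eqref{eq:giryf}, and the measurability of $\mathfrak m^k$ in \eqref{eq:frakmk}, and crucially does not require domination or discreteness on the sampling operators — which is precisely the generality the theorem advertises. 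A minor notational care is also needed to handle the reversed indexing $(y_n,\ldots,y_1)$ in \eqref{eq:recursive}, which is an artifact of peeling off the latest observation at each recursive step.
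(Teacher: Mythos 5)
Your proposal is correct and follows essentially the same route as the paper: both verify the operator equation \eqref{eq:bayesinv0} by disintegrating the joint law over $\Yy^{n-1}$ (the paper packages this step as Lemma \ref{lem:decompev}/Proposition \ref{prop:decompev}), exploiting the product structure $\overline{\pb_{\Pi_\Xx(S_n)}}(\theta)=\overline{\pb_{\Pi_\Xx(S_{n-1})}}(\theta)\otimes\overline{\pb_{x_n}}(\theta)$, the single-datum inversion property of $\qb_{x_n}$ relative to the intermediate posterior, and the recursive hypothesis on $\qb_{\Pi_\Xx(S_{n-1})}$. The only step you leave implicit that the paper spells out is the passage from indicator integrands to the weight $\pb_{x_n}(A_n|\cdot)\in\Ff_b(\Theta)$ via density of step functions in the sup-norm.
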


We abbreviate  $	\qb_{\Pi_\Xx (S_n)} (\cdot\| \mu_\Theta)$  as $\qb_{\Pi_\Xx (S_n)}$. For the  proof of   Theorem  \ref{thm:binv} we need  Lemma  \ref{lem:decompev} below  stating that  $\underline{\pb_{x_n} }\circ \underline{\qb_{\Pi_\Xx (S_{n-1})}}: \Yy^{n-1} \leadsto  \Yy$ is a   regular conditional probability measure   for 	$\big(\underline{\pb_{\Pi_\Xx (S_{n})}}\big)_*\mu_\Theta \in \Pp (\Yy ^{n})$ with respect to the projection $\Pi_{\Yy^{n-1}}: \Yy^n \to \Yy^{n-1}$. 
	\[
\xymatrix@1{
	(\Theta,\mu_\Theta) \ar@{~>}[dd]_{\underline{\pb_{x_n}}}\ar@/_1pc/@{~>}[ddrrrr]_{\underline{\pb_{\Pi_\Xx (S_{n-1})}}} \ar@{~>}[rrrr]^
	{\underline{\pb_{\Pi_\Xx (S_n)}}} & & & & \Yy^{n}\ar[dd]^{\Pi_{\Yy^{n-1}}} \\
	& &&&\\
	\Yy  &&&& \ar@/_1pc/@{~>}[uullll]_{\underline{\qb_{\Pi_\Xx (S_{n-1})}}} \Yy^{n-1}.
}
`\]

\begin{lemma} \label{lem:decompev}  Assume the  condition  of Theorem \ref{thm:binv}.  Then  we  have
	\begin{equation}\label{eq:marginal}
		 \big(\underline{ \pb_{\Pi_\Xx (S_{n-1})}}\big)_* \mu _\Theta  =  (\Pi_{\Yy^{n-1}})_* \big(\underline{\pb_{\Pi_\Xx (S_n)}}\big)_*\mu_\Theta, 
		 \end{equation}
\begin{equation}\label{eq:n+1}
		\big(\underline{\pb_{\Pi_\Xx (S_n)}}\big)_*\mu_\Theta = \big(\Gamma_{\underline {\pb_{x_n}} \circ \underline{\qb_{\Pi_\Xx (S_{n-1})}}}\big)_*  \big(\underline{ \pb_{\Pi_\Xx (S_{n-1})}}\big)_* \mu _\Theta.
\end{equation}
\end{lemma}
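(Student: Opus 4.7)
The plan is to exploit two structural facts: the sampling operator has a built-in product structure $\overline{\pb_{\Pi_\Xx(S_n)}}(\theta) = \overline{\pb_{\Pi_\Xx(S_{n-1})}}(\theta) \otimes \overline{\pb_{x_n}}(\theta)$ (coming from $\pb_{X_m} = \mathfrak m^m \circ E_{X_m}\circ \pb$, i.e.\ conditional independence of the labels given $\theta$), and the Bayesian inversion $\qb_{\Pi_\Xx(S_{n-1})}$ satisfies the operator equation \eqref{eq:bayesinv0} relative to $\mu_\Theta$. Both identities then reduce to direct verification on measurable rectangles $A\times B$ with $A\in\Sigma_{\Yy^{n-1}}$, $B\in\Sigma_\Yy$, followed by a monotone class argument.

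For \eqref{eq:marginal}, I would test against an arbitrary $A\in\Sigma_{\Yy^{n-1}}$. By the product factorisation and Fubini,
\[ (\Pi_{\Yy^{n-1}})_*\bigl(\underline{\pb_{\Pi_\Xx(S_n)}}\bigr)_*\mu_\Theta(A) = \int_\Theta \overline{\pb_{\Pi_\Xx(S_{n-1})}}(\theta)(A)\cdot\overline{\pb_{x_n}}(\theta)(\Yy)\, d\mu_\Theta(\theta), \]
and the second factor equals $1$, leaving exactly $\bigl(\underline{\pb_{\Pi_\Xx(S_{n-1})}}\bigr)_*\mu_\Theta(A)$.

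For \eqref{eq:n+1}, abbreviate $\nu_{n-1} := \bigl(\underline{\pb_{\Pi_\Xx(S_{n-1})}}\bigr)_*\mu_\Theta$ and $K := \underline{\pb_{x_n}}\circ\underline{\qb_{\Pi_\Xx(S_{n-1})}}$. By \eqref{eq:pgraph} and the definition of composition of Markov kernels,
\[ (\Gamma_K)_*\nu_{n-1}(A\times B) = \int_A\int_\Theta \overline{\pb_{x_n}}(\theta)(B)\, d\overline{\qb_{\Pi_\Xx(S_{n-1})}}(y)(\theta)\, d\nu_{n-1}(y). \]
I would then invoke the defining operator equation \eqref{eq:bayesinv0} for $\qb_{\Pi_\Xx(S_{n-1})}$, extended from indicators to the bounded measurable integrand $\theta\mapsto\overline{\pb_{x_n}}(\theta)(B)$ via a monotone class argument, to rewrite the right-hand side as
\[ \int_\Theta \overline{\pb_{\Pi_\Xx(S_{n-1})}}(\theta)(A)\cdot\overline{\pb_{x_n}}(\theta)(B)\, d\mu_\Theta(\theta), \]
which, by the product factorisation of $\overline{\pb_{\Pi_\Xx(S_n)}}(\theta)$, equals $\bigl(\underline{\pb_{\Pi_\Xx(S_n)}}\bigr)_*\mu_\Theta(A\times B)$.

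The main obstacle is the passage from the raw operator identity \eqref{eq:bayesinv0}, which as written concerns indicators of measurable rectangles in $\Theta\times\Yy^{n-1}$, to the integration formula against the bounded measurable integrand $\theta\mapsto\overline{\pb_{x_n}}(\theta)(B)$ used above. This is a standard approximation (indicators, simple functions, bounded measurable functions via monotone convergence), but it is the only nontrivial step; everything else is manipulation of definitions together with Fubini and the categorical identities \eqref{eq:tcompose} and \eqref{eq:pgraph}.
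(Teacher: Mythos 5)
Your proof is correct, and it rests on exactly the two ingredients the paper isolates: the product factorisation $\overline{\pb_{\Pi_\Xx(S_n)}}(\theta)=\overline{\pb_{\Pi_\Xx(S_{n-1})}}(\theta)\otimes\overline{\pb_{x_n}}(\theta)$ coming from conditional independence, and the operator equation \eqref{eq:bayesinv0} for the Bayesian inversion. The execution differs from the paper's. The paper does not argue on rectangles at all: it derives the lemma as a special case of the more general Proposition~\ref{prop:decompev}, whose proof is purely functorial --- for \eqref{eq:n+1g} it invokes the graph-of-a-composition formula \eqref{eq:graphcomp} to write $\Gamma_{\underline{\pb_\Xx}\circ\underline{\qb_\Yy}}=(\Id_\Yy\times\underline{\pb_\Xx})\circ\Gamma_{\underline{\qb_\Yy}}$, substitutes \eqref{eq:bayesinv0} as an equality of measures on $\Theta\times\Yy$, and pushes both sides forward by $\Id_\Yy\times\underline{\pb_\Xx}$. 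Since pushing forward an equality of measures is automatic, the paper never has to extend \eqref{eq:bayesinv0} from indicators to general bounded integrands; your monotone-class step (integrating $\theta\mapsto\overline{\pb_{x_n}}(\theta)(B)$ against the two sides of \eqref{eq:bayesinv0}, viewed for fixed $A$ as an equality of measures on $\Theta$) is the element-wise shadow of that same manoeuvre, and is the only place where you do genuine work that the categorical formalism absorbs. Both arguments are sound: the paper's yields a reusable general proposition (cited again for Theorems~\ref{thm:posspred} and \ref{thm:posspredr}), while yours is more self-contained and makes the underlying measure-theoretic computation explicit.
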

\begin{proof}[Proof of Lemma \ref{lem:decompev}]  Lemma \ref{lem:decompev} is a particular case of Proposition \ref{prop:decompev} below.
\end{proof}

	\begin{proposition} \label{prop:decompev}  
		Assume that  $(\Theta, \mu_\Theta)$ is a probability space, $\Xx, \Yy$ are measurable spaces. Let  $\pb _\Xx \in \Meas \big(\Theta, \Pp (\Xx)\big)$,  $\pb _\Yy \in \Meas \big(\Theta, \Pp  (\Yy)\big)$.  If $\qb_\Yy \in \Meas (\Yy, \Pp  (\Theta))$ is a Bayesian inversion of  $\pb _\Yy$ relative to $\mu_\Theta$,  then   we  have
		\begin{equation}\label{eq:marginal1}
		\big(\underline{ \pb_{\Yy}}\big)_* \mu _\Theta  =  (\Pi_{\Yy})_* \big(\underline{\mathfrak m ^2 (\pb_\Yy , \pb_\Xx)}\big)_*\mu_\Theta , 
	\end{equation}	
		\begin{equation}\label{eq:n+1g}
			\big(\underline{\mathfrak m ^2 (\pb_\Yy , \pb_\Xx)}\big)_*\mu_\Theta = \big(\Gamma_{\underline {\pb_\Xx} \circ \underline{\qb_\Yy}}\big)_*  \big(\underline{ \pb_\Yy}\big)_* \mu _\Theta.
		\end{equation}
			\[
		\xymatrix@1{
			(\Theta,\mu_\Theta) \ar@{~>}[dd]_{\underline{\pb_{\Xx}}}\ar@/_1pc/@{~>}[ddrrrr]_{\underline{\pb_\Yy}} \ar@{~>}[rrrr]^
			{\underline{\mathfrak m^2 (\pb_\Yy, \pb_\Xx)}} & & & & \Yy \times \Xx\ar[dd]^{\Pi_{\Yy}} \\
			& &&&\\
			\Xx  &&&& \ar@/_1pc/@{~>}[uullll]_{\underline{\qb_\Yy}} \Yy.
		}
		\]
	\end{proposition}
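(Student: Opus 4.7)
The plan is to prove \eqref{eq:marginal1} by a direct functoriality argument, and to prove \eqref{eq:n+1g} by combining the graph composition formula \eqref{eq:graphcomp} with the defining identity \eqref{eq:bayesinv0} of the Bayesian inversion $\qb_\Yy$, reducing everything to an identity of probabilistic morphisms before pushing forward $\mu_\Theta$.

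For \eqref{eq:marginal1}, I first observe that for each $\theta \in \Theta$ the $\Yy$-marginal of the product measure $\overline{\mathfrak m^2(\pb_\Yy,\pb_\Xx)}(\theta) = \pb_\Yy(\theta) \otimes \pb_\Xx(\theta)$ equals $\pb_\Yy(\theta)$. Hence, as probabilistic morphisms, $\Pi_\Yy \circ \underline{\mathfrak m^2(\pb_\Yy,\pb_\Xx)} = \underline{\pb_\Yy}$, and \eqref{eq:marginal1} then follows by pushing forward $\mu_\Theta$ and applying functoriality \eqref{eq:giryf}.

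For \eqref{eq:n+1g}, the plan has three steps. First, I will apply \eqref{eq:graphcomp} with $p_1 = \underline{\qb_\Yy}$ and $p_2 = \underline{\pb_\Xx}$ to rewrite $\Gamma_{\underline{\pb_\Xx}\circ\underline{\qb_\Yy}} = (\Id_\Yy \times \underline{\pb_\Xx}) \circ \Gamma_{\underline{\qb_\Yy}}$, so that functoriality \eqref{eq:giryf} turns the right-hand side of \eqref{eq:n+1g} into
\[
(\Id_\Yy \times \underline{\pb_\Xx})_* (\Gamma_{\underline{\qb_\Yy}})_* (\underline{\pb_\Yy})_* \mu_\Theta.
\]
Second, I will rewrite \eqref{eq:bayesinv0} in the equivalent form $(\Gamma_{\underline{\qb_\Yy}})_* (\underline{\pb_\Yy})_* \mu_\Theta = \sigma_* (\Gamma_{\underline{\pb_\Yy}})_* \mu_\Theta$, where $\sigma \colon \Theta \times \Yy \to \Yy \times \Theta$ is the coordinate swap (the inverse of $\sigma_{\Yy,\Theta}$). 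Third, I will verify the morphism identity
\[
(\Id_\Yy \times \underline{\pb_\Xx}) \circ \sigma \circ \Gamma_{\underline{\pb_\Yy}} = \underline{\mathfrak m^2(\pb_\Yy,\pb_\Xx)}
\]
by evaluating at a general $\theta \in \Theta$: the graph produces $\delta_\theta \otimes \pb_\Yy(\theta)$, the swap turns this into $\pb_\Yy(\theta) \otimes \delta_\theta$, and integration against $\Id_\Yy \times \underline{\pb_\Xx}$ picks out $\pb_\Xx(\theta)$ from the Dirac in the second factor, yielding $\pb_\Yy(\theta) \otimes \pb_\Xx(\theta) = \overline{\mathfrak m^2(\pb_\Yy,\pb_\Xx)}(\theta)$. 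A final application of \eqref{eq:giryf} closes the loop and gives \eqref{eq:n+1g}.

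The step requiring the most care is the third one: keeping track of the swap $\sigma$ and of the direction of the graph $\Gamma_{\underline{\qb_\Yy}} \colon \Yy \leadsto \Yy \times \Theta$ versus $\Gamma_{\underline{\pb_\Yy}} \colon \Theta \leadsto \Theta \times \Yy$ is the only genuine opportunity for error. Once these are kept straight, the calculation is purely formal: it expresses nothing more than the fact that conditional independence of $y$ and $x$ given $\theta$ gets encoded as a product measure via \eqref{eq:frakmk}, and the result then falls out of the three abstract identities \eqref{eq:graphcomp}, \eqref{eq:giryf}, and \eqref{eq:bayesinv0}.
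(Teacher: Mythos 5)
Your proposal is correct and follows essentially the same route as the paper: \eqref{eq:marginal1} via the observation that $\Pi_\Yy$ kills the second factor of the product kernel, and \eqref{eq:n+1g} via the graph--composition formula \eqref{eq:graphcomp}, the defining identity \eqref{eq:bayesinv0} of $\qb_\Yy$ (swapped by $\sigma_{\Theta,\Yy}$), and functoriality \eqref{eq:giryf}. Your explicit pointwise verification that $(\Id_\Yy\times\underline{\pb_\Xx})\circ\sigma\circ\Gamma_{\underline{\pb_\Yy}}=\underline{\mathfrak m^2(\pb_\Yy,\pb_\Xx)}$ at each $\theta$ is a step the paper leaves implicit, so your write-up is if anything slightly more complete.
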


	\begin{proof}[Proof of Proposition \ref{prop:decompev}] (1)  To prove \eqref{eq:marginal1}, it  suffices  to show  that
		\begin{equation}\label{eq:marginal2}
			\pb _\Yy   = \overline{\Pi_\Yy \circ \underline{\mathfrak m ^2 (\pb_\Yy , \pb_\Xx)}}.
		\end{equation}
		Using  Composition Formula \eqref{eq:tcompose}, we conclude that Equation  \eqref{eq:marginal2} is  equivalent  to the following:
		\begin{equation}\label{eq:marginal3}
		\pb _\Yy (\theta)   = (\Pi_{\Yy})_* \mathfrak m ^2 (\pb_\Yy , \pb_\Xx) (\theta) \;  \forall \theta \in \Theta. 
	\end{equation}	
	Equation \eqref{eq:marginal3} holds since $\mathfrak m ^2 (\pb_\Yy , \pb_\Xx) (\theta) =  \pb_\Yy (\theta)\otimes \pb_\Xx(\theta)$.
	
	(2)	  Using  Formula \eqref{eq:graphcomp}
	for   the graph of a composition of probabilistic  morphisms,  we obtain
	\begin{align}
		\big(\Gamma_{\underline {\pb_{\Xx}} \circ \underline{\qb_\Yy}}\big)_*  \big(\underline{ \pb_\Yy}\big)_* \mu _\Theta\nonumber\\
		=  \big(\Id_{\Yy}\times \underline{\pb_{\Xx}}\big)_* \big(\Gamma_{\underline{\qb_\Yy}})_*
		\big(\underline{\pb_\Yy}\big)_* \mu _\Theta.
		\label{eq:decompev1g}
	\end{align}	
	Taking into account  that $\qb_\Yy: \Yy \to \Pp (\Theta)$ is a Bayesian inversion of  $ \pb_\Yy: \Theta \to \Pp (\Yy)$ relative to $\mu_\Theta$, 
	we obtain Equation \eqref{eq:n+1g} from  \eqref{eq:decompev1g}:
	\begin{align*}
	\big(\Gamma_{\underline {\pb_\Xx} \circ \underline{\qb_\Yy}}\big)_*  \big(\underline{ \pb_\Yy}\big)_* \mu _\Theta 
 =  \big(\Id_{\Yy}\times \underline{\pb_\Xx	}\big)_* (\sigma_{\Theta, \Yy})_*  \big (\Gamma_{\underline{\pb_\Yy}}\big)_*\mu_\Theta\nonumber\\
 = \big(\underline{\mathfrak m ^2 (\pb_\Yy , \pb_\Xx)}\big)_*\mu_\Theta.
\end{align*}
	\end{proof}

	\begin{proof}[Proof of Theorem  \ref{thm:binv}] 
		By   Decomposition Formula \eqref{eq:tdecomp},  we have
		$$(\underline{\pb_{\Pi_\Xx (S_n)}})_*\mu_\Theta = (\Pi _\Yy)_* (\Gamma _{\underline{\pb_{\Pi_\Xx (S_n)}}})_* \mu_\Theta.$$
		Hence, to prove that $\qb_{\Pi_\Xx (S_n)}: \Yy^{n } \to \Pp (\Theta)$ defined  by \eqref{eq:recursive} is a Bayesian inversion of $\pb_{\Pi_\Xx (S_n)}: \Theta \to \Pp (\Yy^n)$, it suffices to show that
		$$ (\sigma_{\Theta, \Yy^n})_*  (\Gamma _{ \underline{\pb_{\Pi_\Xx(S_n)}}})_* \mu_\Theta  =  (\Gamma_{ \underline{\qb_{\Pi_\Xx (S_n)}}})_* (\pb_{\Pi_\Xx(S_n)} )_* \mu_\Theta.$$
		By \eqref{eq:pgraph}, it suffices  to show that  for  any   $A_n \in \Sigma_\Yy$, $A_{n-1} \in \Sigma_{\Yy^{n-1}}$,  $B \in \Sigma_\Theta$ we have
		\begin{align}
			\int_{B}  \pb_{x_n} (A_n|\theta)\pb_{\Pi_\Xx (S_{n-1})} (A_{n-1}|\theta)d\mu_\Theta (\theta)\nonumber\\
			= \int_{A_{n-1} \times  A_n} \qb_{x_n} \big(B| y_n\|\qb_{\Pi_\Xx (S_{n-1})} (y_1, \ldots, y_{n-1}) \big)d(\underline{\pb_{\Pi_\Xx (S_n)}} )_*\mu_\Theta  (y_1,\ldots,  y_n).\label{eq:binvern}
		\end{align}
		
		By Lemma \ref{lem:decompev},  the  Markov kernel  $\overline{ \underline{\pb_{x_n}} \circ  \underline{\qb_{\Pi_\Xx(S_{n-1})}}}: \Yy^{n-1}  \to\Pp ( \Yy)$ is a  regular  conditional  probability measure  for  the joint  distribution  $(\underline{\pb_{\Pi_\Xx(S_n)}})_* \mu_\Theta \in \Pp (\Yy^n)$ with respect to the projection $\Pi_{\Yy^{n-1}} : \Yy^n \to \Yy^{n-1}$.  Taking into account  Composition Formula  \eqref{eq:tcompose},  we have 
		
	\begin{equation}\label{eq:decomp1} \overline{ \underline{\pb_{x_n}} \circ  \underline{\qb_{\Pi_\Xx(S_{n-1})}}} = (\underline {\pb_{x_n}})_* \circ  \qb_{\Pi_\Xx (S_{n-1})}.
\end{equation}
Applying the  disintegration formula,  and taking into account \eqref{eq:decomp1}, we  verify    \eqref{eq:binvern}  as follows
		
		\begin{align}
			\int_{B}  \pb_{x_n} (A_n|\theta)\pb_{\Pi_\Xx (S_{n-1})} (A_{n-1}|\theta)d\mu_\Theta (\theta)\nonumber\\
			\stackrel{?}{=} \int_{A_{n-1}} \int_{  A_n} \qb_{x_n} \big(B| y_n\|\qb _{\Pi_\Xx (S_{n-1})}(y_1, \ldots , y_{n-1})\big)  d(\underline {\pb_{x_n}})_* \nonumber\\
			\big(\qb_{\Pi_\Xx (S_{n-1})} (y_1, \ldots, y_{n-1})\big) (y_n) d(\underline{\pb_{\Pi_\Xx (S_{n-1})}} )_*\mu_\Theta  (y_1, \ldots, y_{n-1})\nonumber\\
			= \int_{A_{n-1}} \int _B\pb_{x_n} (A_n|\theta) d\qb_{\Pi_\Xx (S_{n-1})} (\theta|y_1, \ldots, y_{n-1})d(\underline{\pb_{\Pi_\Xx(S_{n-1})}})_* \mu_\Theta  (y_1, \ldots, y_{n-1}).\label{eq:binvern1}
		\end{align}
		The last equality holds  since  
		$$\qb_{x_n}  \big(\cdot |\qb_{\Pi_\Xx (S_{n-1})} (y_1, \ldots, y_{n-1})\big): \Yy \to \Pp (\Theta)$$
		 is  a Bayesian inversion of $\pb_{x_n}:\Theta \to \Pp (\Yy)$ relative to $\qb _{\Pi_\Xx (S_{n-1})} (y_1, \ldots, y_n)\in \Pp  (\Theta)$.
		
		Since  $\pb_{x_n} (A_n|\cdot) \in \Ff_b (\Theta)$,  fixing $A_{n-1} $ and $B$,  we  extend  the LHS and RHS of \eqref{eq:binvern1} as  linear functions  on  $\Ff_b (\Theta)$.  Since $\Ff_s (\Theta)$ is  dense   in $\Ff_b (\Theta)$ in the sup-norm, to prove
		\eqref{eq:binvern},  it suffices  to show that for any $B' \in \Sigma_\Theta$ we have
		
		\begin{align}
			\int_{ B\cap B'} \pb_{\Pi_\Xx (S_{n-1})} (A_{n-1}|\theta)d\mu_\Theta (\theta)\nonumber\\
		 = \int_{A_{n-1}} \int_{ B \cap B'} d\qb_{\Pi_\Xx (S_{n-1})} (\theta|y_1, \ldots, y_{n-1})d(\underline{\pb_{\Pi_\Xx (S_{n-1})}})_*\mu_\Theta (y_1, \ldots, y_{n-1}).\label{eq:binvern2}
		\end{align}
		
		Equation \eqref{eq:binvern2}  holds  since  $\qb_{\Pi_{\Xx (S_{n-1})}}: \Yy^{n-1} \to \Pp (\Theta)$ is a Bayesian  inversion  of  $\pb_{\Pi_\Xx (S_{n-1})}: \Theta \to \Yy^{n-1}$ relative to $\mu_\Theta$.

	\end{proof}

	For  a finite  set $\Xx$  and a measurable space $\Yy$, we propose   another method for   computing     Bayesian  inversions on universal  Bayesian   models  $\big(\Pp (\Yy) ^\Xx, \mu,\\ \Id_{\Pp  (\Yy) ^\Xx}, \Pp(\Yy) ^\Xx\big)$ using projective  limits.
	
	We denote by $\pi (\Yy)$ the set of all   finite partitions of $\Yy$  into measurable subsets.
	
	   Denote  by $|(A)|$ the  size of  a  finite  partition  $(A) =  \dot \cup_{ i=1} ^k A_i\in \pi(\Yy)$, and by $\Om_{ (A)}:= \{  A_1, \ldots, A_k\}$  the finite   set of  size  $|(A)|$ associated  with $(A)$.
	  Each  partition  $  (A) =  \dot \cup_{ i=1} ^k A_i\in \pi(\Yy)$  is defined uniquely by   a surjective measurable map 
	$$\pi_{(A)} : \Yy \to \Om_{(A)}$$ 
	that maps  $A_i \ni y $ to  $A_i$. 
	If   a partition $ (A) = (A_1, \ldots, A_k)$ of size $k$  of $\Yy$  is a refinement   of  a partition $ (A') =(A'_1, \ldots, A_l')$ of  size $l\le k$ we write  $(A')\le (A)$.  Then there  exists
	a map  $$\pi ^{(A)}_{ (A')}: \Om _{(A)} \to \Om_{(A')}$$ such that
	\begin{equation}\label{eq:compose1}
		\pi_{(A')} = \pi^{(A)}_{ (A')} \circ \pi_{(A)}.
	\end{equation}
	Thus  $(\pi  (\Yy), \le)$ is a  directed  set of finite  (measurable) partitions  of   $\Yy$. 
	 
	\begin{theorem}\label{thm:projbi} Let   $\Yy$ be a measurable  space,  and  $\Xx : = \{ x_1, \ldots, x_n\}$  a finite  set  and  $  X_k \in \Xx ^k$.  Let  $\mu \in \Pp (\Pp  (\Yy)^\Xx)$.  Assume that  for any $ (A)\in \pi(\Yy)$  there exists a   Bayesian  inversion  $\qb ^{(A)}_{X_k}: \Om _{(A) } ^k \to  \Pp  \big (\Pp  (\Om _{ ( A)}) ^\Xx)$  of $\mathfrak m^k \circ E_{X_k}: \Pp(\Om_{(A)}) ^\Xx \to  \Pp(\Om_{  (A)} ^k)$  relative to   $P_*  (\pi_{(A)})_* ^\Xx (\mu)\in \Pp (\Pp (\Om_{ (A)})^\Xx)$
		such that
		   for any  $ (A) \le (B)  \in  \pi (\Yy)$  the  following diagram is commutative:
		
		$$ 
		\xymatrix{\Pp \big(\Pp (\Om_{ (B)})^\Xx\big)  \ar[d]_{P_*(\pi^{(B)}_{ (A)}) ^\Xx_*} & & \ar[ll]_{\qb^{(B)}_{X_k}} \Om_{ (B)} ^k  \ar[d]^{ ( \pi^{(B)}_{ (A)})^k}\\
		\Pp \big (	\Pp (\Om_{ (A)})^\Xx\big) &  &\ar[ll]_{\qb ^{(A)}_{X_k}}\Om_{ (A)} ^k 	.
		}
		$$
		Assume  that there  exists  a  map $\qb_{X_k}: \Yy^k \to \Pp (\Pp (\Yy)^\Xx)$ such that   for any  $ (A) \in \pi (\Yy)$ the following  diagram is commutative  for any $ (A)\in \pi (\Yy)$.
		\begin{equation}\label{eq:comm2}
		\xymatrix{\Pp \big (\Pp (\Yy)^\Xx \big) \ar[d]_{P_* (\pi_{ (A)}) ^\Xx_*} & & \ar[ll]_{\qb_{X_k}} \Yy ^k  \ar[d]^{(\pi_{ (A)})^k}\\
		\Pp \big(	\Pp (\Om_{ (A)})^\Xx\big) & & \ar[ll]_{\qb ^{(A)}_{X_k}}\Om_{ (A)} ^k 	.
		}
		\end{equation}
		 Then   $\qb_{X_k} $ is a  Bayesian inversion  of  $\mathfrak m ^k \circ E_{X_k}:\Pp (\Yy)^\Xx  \to \Pp(\Yy^k)$ relative to  $\mu$.
	\end{theorem}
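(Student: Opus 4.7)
The plan is to verify the operator equation \eqref{eq:bayesinv0} characterizing Bayesian inversion, i.e., that
$$ (\sigma_{\Yy^k,\Pp(\Yy)^\Xx})_*(\Gamma_{\underline{\qb_{X_k}}})_*(\underline{\mathfrak m^k \circ E_{X_k}})_*\mu = (\Gamma_{\underline{\mathfrak m^k \circ E_{X_k}}})_* \mu $$
as probability measures on $\Pp(\Yy)^\Xx \times \Yy^k$. The idea is to reduce this equality, for each finite partition $(A)\in\pi(\Yy)$, to the corresponding operator equation for $\qb^{(A)}_{X_k}$ by pushing forward under
$$ \Psi_{(A)} := P_*(\pi_{(A)})^\Xx_* \times (\pi_{(A)})^k : \Pp(\Yy)^\Xx \times \Yy^k \to \Pp(\Om_{(A)})^\Xx \times \Om_{(A)}^k, $$
and then invoking the fact that the preimages under these maps, as $(A)$ ranges over $\pi(\Yy)$, generate the product $\sigma$-algebra.

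First I would observe that $\Sigma_{\Yy^k}$ is generated by measurable rectangles, each of which is a preimage under $(\pi_{(A)})^k$ for some $(A)$ refining the sides and their complements. Likewise $\Sigma_w$ on each factor of $\Pp(\Yy)^\Xx$ is generated by evaluation maps $e_A$, whose cylinders pull back from $P_*(\pi_{(A)})_*$. Taking common refinements of partitions, the family $\{\Psi_{(A)}^{-1}(R) : (A)\in \pi(\Yy),\, R \text{ a measurable rectangle}\}$ is closed under intersections, hence a $\pi$-system generating $\Sigma_{\Pp(\Yy)^\Xx}\otimes \Sigma_{\Yy^k}$. A Dynkin $\pi$-$\lambda$ argument will then extract equality of the two measures from equality of their $\Psi_{(A)}$-pushforwards for every $(A)$.

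Next, for each $(A)$, I would unfold the pushforwards via Formulas \eqref{eq:pgraph}, \eqref{eq:tcompose}, and \eqref{eq:graphcomp}. On the right-hand side, $\overline{\mathfrak m^k \circ E_{X_k}}(f) = \bigotimes_i f(x_i)$ pushes forward under $\pi_{(A)}^k$ to $\bigotimes_i (\pi_{(A)})_*f(x_i) = \overline{\mathfrak m^k \circ E^{(A)}_{X_k}}(\Psi^\Xx f)$, where $\Psi = P_*(\pi_{(A)})_*$ and $E^{(A)}_{X_k}$ denotes evaluation on $\Pp(\Om_{(A)})^\Xx$. Using \eqref{eq:pgraph} this gives
$$ (\Psi_{(A)})_*(\Gamma_{\underline{\mathfrak m^k \circ E_{X_k}}})_* \mu = (\Gamma_{\underline{\mathfrak m^k \circ E^{(A)}_{X_k}}})_* P_*(\pi_{(A)})^\Xx_* \mu . $$
On the left-hand side, the hypothesis diagram \eqref{eq:comm2} gives $(\Psi^\Xx)_* \overline{\qb_{X_k}}(y) = \overline{\qb^{(A)}_{X_k}}(\pi_{(A)}^k y)$, which combined with \eqref{eq:graphcomp} and \eqref{eq:tcompose} yields the parallel identity
$$ (\Psi_{(A)})_*(\sigma_{\Yy^k,\Pp(\Yy)^\Xx})_*(\Gamma_{\underline{\qb_{X_k}}})_*(\underline{\mathfrak m^k \circ E_{X_k}})_*\mu = (\sigma')_*(\Gamma_{\underline{\qb^{(A)}_{X_k}}})_*(\underline{\mathfrak m^k \circ E^{(A)}_{X_k}})_* P_*(\pi_{(A)})^\Xx_* \mu , $$
with $\sigma'$ the analogous swap on $\Om_{(A)}^k \times \Pp(\Om_{(A)})^\Xx$. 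By the hypothesis that $\qb^{(A)}_{X_k}$ is a Bayesian inversion relative to $P_*(\pi_{(A)})^\Xx_*\mu$, the two right-hand sides are equal, completing the verification for $(A)$.

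The main obstacle will be this last step of bookkeeping: carefully threading the swap $\sigma$, the graph construction, and the composition $\underline{\qb_{X_k}} \circ \underline{\mathfrak m^k \circ E_{X_k}}$ through $\Psi_{(A)}$ without confusing which coordinate lives where, and making sure diagram \eqref{eq:comm2} is applied in the correct slot. A secondary technical point worth handling up front is measurability of $\qb_{X_k}$: since $\Sigma_w$ on $\Pp(\Pp(\Yy)^\Xx)$ is generated by preimages of $P_*(\Psi^\Xx)_*$, the measurability of $\qb_{X_k}$ follows from the measurability of each $\qb^{(A)}_{X_k}$ together with \eqref{eq:comm2}; this must be noted before the graph $\Gamma_{\underline{\qb_{X_k}}}$ is legitimately invoked.
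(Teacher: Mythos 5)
Your proposal is correct and follows essentially the same route as the paper: the paper likewise reduces the operator equation \eqref{eq:bayesinv0} to generating rectangles, uses a common refining partition $(\tilde A)$ (its Lemma \ref{lem:inverse}) to pull these back from the finite-partition level, pushes both sides down via the commutative diagrams, and invokes the finite-level Bayesian inversion hypothesis, handling measurability of $\qb_{X_k}$ exactly as you do. Your explicit packaging via the maps $\Psi_{(A)}$ and a $\pi$-$\lambda$ argument is just a more systematic phrasing of the same reduction.
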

	\begin{proof}   To prove  Theorem \ref{thm:projbi}, it  suffices to show that  $\qb_{X_k}$ is  a measurable map and  for any $ \Aa= \Aa_1 \times \ldots \times  \Aa_n  \in \Sigma_{\Pp (\Yy) ^\Xx}$, $\Aa_i \in \Sigma_{\Pp  (\Yy)}$,  $ B = B_1 \times \ldots \times  B_k \in  \Sigma_{\Yy^k}$, $B_j \in \Sigma _\Yy$,  we have 
		\begin{equation}\label{eq:lim1}
			 (\Gamma_{\underline{\qb_{X_k}}})_*
			 (\underline{\mathfrak m^k \circ E_{X_k}})_*\mu  (B \times \Aa)= (\Gamma_{\underline{\mathfrak m^k\circ E_{X_k}}})_* \mu  (\Aa \times B) .
		\end{equation}
	
Recall  that  $\Sigma_{\Pp (\Yy)}$  is generated  by    subsets  $ e ^{-1}_{A} (C)$   where  $A\in \Sigma_\Yy$,  $ C \in \Bb (\R)$, and $  e_{A}: \Pp  (\Yy) \to \R$ is defined  by  $\mu \mapsto  \mu (A)$. 
	\begin{lemma}\label{lem:inverse}  Let $\Aa_i = e^{-1}_{A_i} (C_i)\in \Sigma_{\Pp (\Yy)}$ for $i \in \overline{1, n}$.
		Then there  exists   a  finite  partition  $ (\tilde A)\in \pi  (\Yy)$  and  a    subset   $ \Ss_1, \ldots, \Ss_n  \in \Sigma_{ \Pp \big( \Om _{   (\tilde A)}\big)}$ such that 
		\begin{equation}\label{eq:inverse}
		 \big ((\pi_{ (\tilde A)}) _* \big) ^{-1} (\Ss_i) = \Aa_i  \text{ for  all } i \in \overline{1, n}.
			\end{equation}
		Hence,    
		 $\Sigma_{\Pp(\Yy) ^\Xx}$  is generated  by  subsets 
		$\Pi_{ j=1} ^n\big(\pi_{(A)})_*^{-1} (ev_{S_i^j}^{-1} (C_i ^j))$ where  $C^j_i \in \Bb (\R)$ and  $S_i^j \subset  \Om _{ (A)}$, $ (A)\in \pi (\Yy)$.
	\end{lemma}
	
	\begin{proof}[Proof of Lemma \ref{lem:inverse}]  Let  $ (\tilde  A)\in \pi (\Yy)$ be a finite partition such that   for any  $i\in \overline{1, n}$  there  exists  a subset  $S_i \in \Om_{ |(\tilde  A)|}$ such  that
		\begin{equation}\label{eq:inverse1}
			\pi_{ (\tilde A)}  ^{-1} (S_i) = A_i.
		\end{equation}
		Then we   have the following commutative diagram  for any  
		$ i \in \overline{1, n}$:
		\begin{equation}\label{eq:comm3}
	\xymatrix{
		\Pp (\Yy)\ar [rr]^{e_{ A_i}} \ar[d]_{\big(\pi_{ (\tilde A)}\big)_*} &&  \R\\
		 \Pp ( \Om_{ (\tilde A)} )\ar[urr] ^{e_{S_i}}.  & &
	}	
		\end{equation}
		It follows that  $\Aa_i =  e_{A_i} ^{-1} (C_i) =  \Ss_i : = \Big(\big(\pi_{ (\tilde A)}\big)_*\Big) ^{-1} ( e^{-1}_{S_i} (C_i))$. This   proves  \eqref{eq:inverse}. The  last  assertion of Lemma \ref{lem:inverse} follows  immediately.
		\end{proof}
	
	{\it Completion  of the proof of  Theorem \ref{thm:projbi}.}     To prove that $\qb _{X_k}$ is measurable, it suffices to show that for any  $\Aa  = \Aa_1 \times \ldots \times \Aa_n  \in \Sigma _{\Pp  (\Yy) ^\Xx}$, $\Aa_i \in \Sigma _{\Pp (\Yy)}$, the composition 
	$ev_A \circ \qb_{X_k}: \Yy^k \to \R$ is  measurable.    By Lemma \ref{lem:inverse}, taking into account   the commutative diagrams \ref{eq:comm2}  and \ref{eq:comm3}  we have 
	$$ev_A \circ \qb_{X_k} = ev_{\tilde A} \circ  \qb_{X_k}   ^{(\tilde A)}$$
	which is   measurable  by   the assumption  of Theorem \ref{thm:projbi}.
	 To complete the proof of Theorem \ref{thm:projbi},  it suffices   to  prove \eqref{eq:lim1}.     By Lemma \ref{lem:inverse},    the RHS  of \eqref{eq:lim1}    equals  
	$(\Gamma_{\underline {\mathfrak m^k \circ E_{X_k}}}  P_*  (\pi_{(\tilde A)}) ^\Xx_* \mu (\Ss_1\times  \ldots  \times \Ss_n \times B )$  and the   LHS  of \eqref{eq:lim1}   equals  
	$(\Gamma_{ \underline{\qb ^{ (\tilde A)}_{X_k}}})_* (\underline{\mathfrak  m ^k \circ E_{X_k}})_*  P_*  (\pi_{(\tilde A)}) ^\Xx_* \mu$.   Since  $\qb ^{ (\tilde A)}_{X_k}$ is  a Bayesian  inversion of $\mathfrak  m ^k \circ E_{X_k}$,   \eqref{eq:lim1}  holds.
	\end{proof}
	
	\begin{example}[Posterior distributions of Dirichlet processes]\label{ex:dirichleti}    For  a measurable  space $\Yy$  denote by $ \Mm^* (\Yy)$ the  measurable space of all non-zero finite measures  on $\Yy$ whose  $\sigma$-algebra   is defined  in the  same way as  the $\sigma$-algebra  $\Sigma_w$ on $\Pp (\Yy)$, see \cite[\S 2.1]{JLT2021}. 
		   By \cite[Theorem 4]{JLT2021}, there  exists  a measurable map
		$\Dd: \Mm^* (\Yy) \to \Pp ^2 (\Yy)$ such that     $\Dd (\alpha)$  is the Dirichlet   measure  on $\Pp (\Yy)$ with parameter  $\alpha$  and  for any $\alpha \in \Mm^* (\Yy)$ the following diagram is  commutative
		$$
		\xymatrix{
			\Mm^*(\Yy) \ar[r]^{\Dd}\ar[d]^{M_*(\pi_{(A)})} &  \Pp^2(\Yy)\ar[d]^{P^2_*(\pi_{ (A)})}\\
			\Mm^*(\Om_{(A)}) \ar[r]^{Dir_{(A)}}&  \Pp^2(\Om_{(A)}).
		}
		$$
		Here $Dir_{(A)}  (\beta)$ is the Dirichlet distribution  with parameter $\beta \in \Mm^* (\Om_{(A)})$,  and $M_* (\pi_{ (A)}) : \Mm^* (\Yy) \to \Mm (\Om_{ _{|(A)|}})$ is defined  by the same formula \eqref{eq:Mhomomorphism}.
		Let us consider a Bayesian statistical model  $\big (\Pp (\Yy), \Dd (\alpha),\Id_{\Pp  (\Yy)}, \Yy\big )$ associated  with the case $\# (X) = 1$ and $k =1$ in Theorem \ref{thm:projbi}.  Lemma \ref{lem:inverse} and the above commutative diagram imply that
	the condition  of Theorem \ref{thm:projbi}  holds   for $\big(\Pp  (\Yy), \Dd (\alpha), \Id_{\Pp (\Yy)}, \Pp (\Yy)\big)$.
		It is known  that 
		$$\qb_{ (A) } (\cdot \| Dir  (M_* (\pi_{ (A)})\alpha ) : \Om _{(A)} \to \Pp ^2  (\Om_{ (A)}), x\mapsto   Dir  (M_* (\pi_{ (A)})\alpha + \delta_x)  $$
		is a Bayesian inversion of the Markov kernel  $\Id_{\Pp (\Om _{(A)})}$ relative to $\Dd \big(M_* (\pi_{ (A)} \alpha)\big)$. 
		
		For $(A)\le (B) \in  (\pi (\Yy), \le)$, we  verify immediately   that the following diagram 
		$$ 
		\xymatrix{\Pp^2 (\Om _{(B)}) \ar[d]^{P^2_*\pi  ^{ (B)}_{ (A)}} & & & \ar[lll]_{\qb_{(B)} (\cdot \| \Dd (M_* (\pi_{ (B)} \alpha)))} \Om _{(B)}\ar[d]^{\pi  ^{ (B)}_{ (A)}}\\
			\Pp^2 (\Om _{(A)})& & & \ar[lll]_{\qb _{(A)}(\cdot \| \Dd (M_* (\pi_{ (A)} \alpha)) )}	\Om _{(A)}.
		}
		$$
		is commutative.   Hence, by Theorem \ref{thm:projbi},    the map
		$$ \qb: \Yy \to \Pp  ^2 (\Yy) , y \mapsto  \Dd (\alpha + \delta_y)$$
		is  a Bayesian inversion  of $\Id _{\Pp (\Yy)}$ relative to   $\Dd (\alpha)$.
		Taking into account Theorem \ref{thm:binv},  the map
		$$\qb ^n : \Yy^n  \to \Pp^2 (\Yy),  (y_1, \ldots, y_n) \mapsto \Dd (\alpha + \sum_{i=1} ^n\delta_{y_i})  $$
		is a Bayesian  inversion  of   the   Markov kernel
		$$\mathrm{\Id}^n_{\Pp (\Yy)}:  \Pp (\Yy) \to \Pp  (\Yy^n), \mu \mapsto  \otimes ^n \mu $$
		relative to  $\Dd (\alpha)$.
	\end{example}

	\begin{remark}\label{rem:binvs}
		Let $\Yy$ be a Souslin space and let
		$\mu\in\Pp(\Pp(\Yy)^\Xx)$. For every finite subset
		$A\subset\Xx$ and every $X_m\in A^m$, the space
		$\Pp(\Yy)^A$ is Souslin. Hence, Jost-L\^e-Tran \cite[Theorem 1]{JLT2021}
		provides a Bayesian inversion
		\[
		\qb_{X_m,A}:\Yy^m\longrightarrow\Pp\bigl(\Pp(\Yy)^A\bigr)
		\]
		of $\mathfrak m^m\circ E_{X_m}$ relative to
		$(R_A^\Xx)_*\mu$.
		
		When $\Xx$ is uncountable, these finite-dimensional Bayesian
		inversions need not admit versions forming a pointwise-compatible
		projective system. Therefore we do not assert the existence of a
		Bayesian inversion with values in
		$\Pp(\Pp(\Yy)^\Xx)$ without an additional uniform-version
		hypothesis.
	\end{remark}

	\section{Posterior predictive distributions}\label{sec:pred}
	
	 In this Section, using   Theorem \ref{thm:binv},  we  shall prove   Theorem \ref{thm:posspred} and \ref{thm:posspredr} on     recursive computing  posterior   predictive   distributions in Bayesian  supervised learning.
	
	\begin{theorem}[Posterior  predictive distribution]\label{thm:posspred}  Let   $ (\Theta, \mu_\Theta, \pb, \Pp (\Yy) ^\Xx)$ be a   Bayesian model  for supervised  learning,  $X_n = (x_1, \ldots, x_n) \in \Xx^n$,  and $T_m  =  (t_1, \ldots,  t_m)\in \Xx^m$.   For $Y_n  =(y_1, \ldots, y_n) \in \Yy^n$  let $ S_n(X_n, Y_n) : =  \big( (x_1, y_1), \ldots, (x_n, y_n)\big) \in   (\Xx\times \Yy)^n$.
	
	1)  The  posterior  predictive   distribution $\Pp^n_m: \Yy^n \to \Pp (\Yy^m)$ defined  by   Equation  \eqref{eq:posterior}  is a   regular  conditional probability measure  for the joint distribution    
	$$\mu ^0_{T_m,S_n(X_n, Y_n), \mu_\Theta}: =\underline{\mathfrak  m^2 (\pb_{T_m}, \pb_{X_n})}_* \mu_\Theta  \in \Pp  (\Yy ^ m\times \Yy^n)$$ 
	with respect to the projection $\Pi_{\Yy ^n} : \Yy ^m \times \Yy^n \to \Yy^n$. Hence,  if $\qb ^n_m: \Yy^n  \to \Pp (\Yy^m)$ is a regular  conditional probability measure  for $\mu ^0_{T_m,S_n(X_m Y_n), \mu_\Theta}$ with respect  to the  projection $\Pi_{\Yy^n}$ then
	  $\qb  ^n_m \big(Y_n\big)$   coincides     with  $\Pp_{T_m|S_n (X_n, Y_n),\mu_\Theta} \in \Pp (\Yy^m)$ in  weak sense    up  to a $(\underline{\pb_{X_n}})_*\mu_\Theta$-zero set.
	
	2)   The  posterior predictive distribution $\Pp_{T_m|S_n(X_n, Y_n),\mu_\Theta} \in \Pp (\Yy^m)$  can be  computed  recursively  as  follows. 
	\begin{enumerate}\label{eq:predrecursive}
		\item    Step 1:   Let     $\qb  ^1_{m+n-1}: \Yy \to  \Pp  (\Yy  ^{m +n -1})$ be a  regular  conditional  probability measure  for 
		the joint  distribution  $\mu ^0_{T_m,S_n(X_n, Y_n), \mu_\Theta)} \in  \Pp (\Yy ^{m +n})$
		with respect to the projection  $\Yy ^{m +n } \to \Yy$.  Then we set
		\begin{equation}\label{eq:predrecursive1}
			\mu _{ (T_m,  S_n(X_n, Y_n), \mu_\Theta)}   ^ 1: =\qb   ^1_{m+n-1}  (y_n)\in \Pp (\Yy^{m +n -1}).
		\end{equation}

		\item    Step $k+1$  for $1\le k \le  n-1$.  
		Let    $\qb  ^{k+1}: \Yy \to  \Pp  (\Yy  ^{m +n -k-1})$ be a  regular  conditional  probability measure  for 
		the joint  distribution   $ \mu ^{k} _{ T_m, S_n, \mu_\Theta} \in \Pp  (\Yy  ^{m+n -k})$ with respect to the projection $\Yy  ^{m+n -k} \to \Yy$ defined inductively in the proof below. Then   we set
		\begin{equation}\label{eq:predrecursivek}
			\mu _{ (T_m,  S_n(X_n, Y_n), \mu_\Theta)}   ^{k+1}: =\qb   ^{k+1} (y_{n-k})\in \Pp (\Yy^{m +n -k-1}).
		\end{equation}

	\end{enumerate}
	
	Then    $	\mu_{ T_m, S_n(X_n, Y_n),  \mu_\Theta} ^ {n}\in \Pp  (\Yy  ^m)$ is  the  posterior predictive  distribution  of   $\Pp_{T_m | S_n(X_n, Y_n), \mu_\Theta}$.
\end{theorem}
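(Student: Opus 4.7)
The overall plan is to deduce part 1) directly from Proposition \ref{prop:decompev} and then obtain part 2) by induction, invoking the chain rule for regular conditional probabilities so that each single-coordinate disintegration step is recognised as the data-space shadow of one Bayesian update. The final step of the induction reduces back to part 1), which identifies the full conditional with the posterior predictive distribution.

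For part 1), I would apply Proposition \ref{prop:decompev} with the substitutions $\pb_\Yy := \pb_{X_n}$, $\pb_\Xx := \pb_{T_m}$ and $\qb_\Yy := \qb_{X_n}$. Formula \eqref{eq:n+1g} then reads
\[
\underline{\mathfrak m^2(\pb_{X_n}, \pb_{T_m})}_* \mu_\Theta = \big(\Gamma_{\underline{\pb_{T_m}} \circ \underline{\qb_{X_n}}}\big)_* (\underline{\pb_{X_n}})_* \mu_\Theta,
\]
which, by the graph-pushforward identity \eqref{eq:pgraph}, says exactly that $\underline{\pb_{T_m}} \circ \underline{\qb_{X_n}} : \Yy^n \leadsto \Yy^m$ is a regular conditional probability for the joint on $\Yy^n \times \Yy^m$ with respect to the projection onto $\Yy^n$. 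Since $\mu^0_{T_m,S_n,\mu_\Theta}$ is merely the coordinate swap of this measure and regular conditional probabilities are invariant under relabelling of the non-conditioned factor, the same kernel disintegrates $\mu^0_{T_m,S_n,\mu_\Theta}$ along $\Pi_{\Yy^n}$. By the composition formula \eqref{eq:tcompose}, its value at $Y_n$ is $(\underline{\pb_{T_m}})_* \qb_{X_n}(Y_n) = (\underline{\pb_{T_m}})_* \mu_{\Theta \mid S_n} = \Pp^n_m(Y_n)$, and the $(\underline{\pb_{X_n}})_*\mu_\Theta$-a.s. uniqueness statement follows from the standard uniqueness of regular conditional probabilities.

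For part 2), I would show by induction on $k \in \{0, 1, \ldots, n\}$ that $\mu^k_{T_m, S_n(X_n, Y_n), \mu_\Theta} \in \Pp(\Yy^{m+n-k})$ coincides, up to a null set, with a regular conditional probability of $\mu^0_{T_m, S_n, \mu_\Theta}$ with respect to the projection $\Yy^{m+n} \to \Yy^k$ onto the last $k$ coordinates, evaluated at $(y_{n-k+1}, \ldots, y_n)$. The base case $k = 0$ is the definition of $\mu^0$. For the inductive step, the construction \eqref{eq:predrecursivek} disintegrates $\mu^k$ along the projection to its final $\Yy$ factor (whose coordinate at this stage is precisely $y_{n-k}$) and evaluates at $y_{n-k}$; the chain rule for regular conditional probabilities then promotes this to a disintegration of $\mu^0$ along the projection to the last $k+1$ coordinates, evaluated at $(y_{n-k}, y_{n-k+1}, \ldots, y_n)$. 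At $k = n$ one obtains that $\mu^n \in \Pp(\Yy^m)$ is a regular conditional of $\mu^0$ along $\Pi_{\Yy^n}$ at $Y_n$, which by part 1) equals $\Pp_{T_m \mid S_n(X_n, Y_n), \mu_\Theta}$ up to a $(\underline{\pb_{X_n}})_*\mu_\Theta$-null set.

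The principal obstacle is essentially bookkeeping rather than conceptual: one must carefully track the identification $\Yy^{m+n} = \Yy^m \times \Yy \times \cdots \times \Yy$ so that the disintegrated coordinate at step $k+1$ really is $y_{n-k}$, and one must justify the iterated-conditioning step at each reduction. The chain rule is classical whenever the single-step conditionals exist, and these are supplied by hypothesis in the recursive procedure. A conceptually instructive alternative is to bypass the data-space chain rule entirely and prove part 2) by unrolling the online formula of Theorem \ref{thm:binv} for $\qb_{X_n}$ and then pushing through $(\underline{\pb_{T_m}})_*$; this is the batch-equals-online viewpoint that the present theorem is meant to embody, and it serves as an independent sanity check on the recursive scheme.
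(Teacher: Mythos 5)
Your argument is correct, and Part~1) is handled exactly as in the paper: a direct application of Proposition~\ref{prop:decompev} with $\pb_\Yy:=\pb_{X_n}$, $\pb_\Xx:=\pb_{T_m}$, followed by the identification of the graph pushforward with a disintegration and the a.s.\ uniqueness of regular conditional probabilities. For Part~2), however, your primary route is genuinely different from the paper's. You stay entirely on the data space $\Yy^{m+n}$ and run an induction on $k$ using the chain rule (tower property) for regular conditional probabilities, so that $\mu^k$ is recognised as a version of the conditional of $\mu^0$ given the last $k$ observed coordinates; the parameter space $\Theta$ reappears only at $k=n$, when Part~1) converts the full conditional into the posterior predictive distribution. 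The paper instead re-enters $\Theta$ at every step: it applies Proposition~\ref{prop:decompev} to the model with the sequentially updated prior $\mu_{\Theta|(x_n,y_n)}$, then $\mu_{\Theta|(x_n,y_n),(x_{n-1},y_{n-1})}$, and so on, and invokes Theorem~\ref{thm:binv} at each stage to identify the online posterior with the batch posterior, so that $\mu^k$ is read off as the joint distribution $\underline{\mathfrak m^2(\pb_{T_m},\pb_{\Pi_\Xx(S_{n-k})})}_*\,\mu_{\Theta|(x_n,y_n),\ldots,(x_{n-k+1},y_{n-k+1})}$. Your version is more elementary in that it needs neither Theorem~\ref{thm:binv} nor any further Bayesian inversions beyond $\qb_{X_n}$, at the price of the measurability and null-set bookkeeping inherent in iterating regular conditional probabilities (which you correctly flag, and which is where the real work of your route lies: one must check that conditioning $\mu^k$ on its last coordinate and evaluating at $y_{n-k}$ agrees, for $(\underline{\pb_{X_n}})_*\mu_\Theta$-a.e.\ $Y_n$, with a version of the conditional of $\mu^0$ given the last $k+1$ coordinates). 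The paper's version buys the explicit ``online equals batch'' interpretation of each recursion step, which is the point of the surrounding theory; your closing remark about unrolling Theorem~\ref{thm:binv} and pushing through $(\underline{\pb_{T_m}})_*$ is in fact essentially the paper's proof.
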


\begin{proof}
1)   The first assertion  of Theorem  \ref{thm:posspred} is a direct  consequence  of  Proposition  \ref{prop:decompev}.

2)  For $1\le k \le  n$  we let $S_k : = \big((x_1, y_1), \ldots, (x_k, y_k)\big)$.  To prove the second assertion of  Theorem \ref{thm:posspred}, we consider the following  diagram

$$
\xymatrix{
	  &  \ar@{~>}[ldd]_{\underline{\pb_{m+n-1}}} (\Theta, \mu_\Theta) \ar@{~>}[dd]_{\underline{\pb_{m+n}} } \ar@{~>}[rdd]^{\underline {\pb_{x_n}} } &\\
	  &   &	\\
	  \Yy  ^{m+n -1}  &  \ar[l]^{\Pi_{\Yy^{m+n -1}}} \Yy  ^{m +n}\ar[r ]_{\Pi _\Yy}	  &  \Yy
}
$$
where 
$$\pb_{m+n-1}: = \mathfrak  m^2  (\pb _{T_m}, \pb_{\Pi_\Xx (S_{  n-1})}), \, \pb _{m +n} : = \mathfrak  m^2  (\pb _{T_m}, \pb_{\Pi_\Xx (S_{  n})}).$$

By   Proposition \ref{prop:decompev}  we have
$$	\mu_{ (T_m, S_n,  \mu_\Theta)} ^1  = \Pp_{(T_m, \Pi_\Xx (S_{n-1}))|  (x_n, y_n),\mu_\Theta}.$$
  Next, we consider  the following   diagram 
  
  $$
  \xymatrix{
  	&  \ar@{~>}[ldd]_{\underline{\pb_{m+n-2}}} (\Theta, \mu_{\Theta| (x_n , y_n)}) \ar@{~>}[dd]_{\underline{\pb_{m+n-1}} } \ar@{~>}[rdd]^{\underline {\pb_{x_{n-1}}} } &\\
  	&   &	\\
  	\Yy  ^{m+n -2}  &  \ar[l]^{\Pi_{\Yy^{m+n -2}}} \Yy  ^{m +n-1}\ar[r ]_{\Pi _\Yy}	  &  \Yy
  }
  $$
  where 
  $$\pb_{m+n-2}: = \mathfrak  m^2  (\pb _{T_m}, \pb_{\Pi_\Xx (S_{  n-2})}).$$
  Applying Proposition \ref{prop:decompev}, taking into account Theorem \ref{thm:binv}, we obtain
 $$	\mu_{( T_m, S_n,  \mu_\Theta)} ^2  = \Pp_{(T_m, \Pi_\Xx(S_{n-2}))|  (x_n, y_n), (x_{n-1}, y_{ n-1}), \mu_\Theta}.$$ 
 Repeating this   procedure, we      obtain
 $$ \mu ^{n}_{ (T_m, S_n , \mu _\Theta)} = \Pp_{T_m| S_n, \mu_\Theta}.$$ 
\end{proof}

For  $X_m: = (x_1, \ldots, x_m) \in \Xx^m$ we denote by $[X_m]$  the   smallest   subset  of $\Xx$ that contains each  of  $x_i$. 

\begin{corollary}\label{cor:psspred}      Let  $\big(\Pp (\Yy)  ^\Xx, \mu, \Id_{\Pp (\Yy)^\Xx}, \Pp (\Yy)  ^\Xx\big)$ be a universal    Bayesian  supervised  models.   Let $T_m \in  \Xx^m, X_n \in \Xx^n$  and  $A: = [T_m] \cup [X_n]$.
	Let $R_A^\Xx  : \Pp (\Yy)^ \Xx \to \Pp (\Yy) ^A, h \mapsto  h_{|A},$  denote the restriction map.  Denote by  $\Id_{X_n }: \Pp (\Yy)^\Xx \to \Pp  (\Yy^n)$  the composition $ \mathfrak m ^n \circ E_{X_n}$. Then   for $(\underline{\Id_{X_n}})_* \mu$-a.s.   $Y_n \in \Yy^n$ we  have
	\begin{equation}\label{eq:res}
		\Pp _{ T_m| S_n(X_, Y_n), \mu} = 	\Pp_{T_m|S_n(X_n, Y_n),  (R_A^\Xx)_*  \mu}
	\end{equation}
in the weak sense, 	where the RHS of   \eqref{eq:res}  is the posterior  predictive  distribution  of the  restricted     Bayesian  supervised learning model  $(\Pp (\Yy) ^A,  (R_A^\Xx )_*\mu_\Theta,\\\Id_{\Pp(\Yy) ^A}, \Pp (\Yy)  ^A)$.
\end{corollary}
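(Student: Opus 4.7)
The plan is to reduce the identity to the uniqueness of the regular conditional probability measure provided by Theorem~\ref{thm:posspred}(1), exploiting the fact that in the universal model both sampling operators factor through the restriction map $R_A^\Xx$, so the joint distributions on $\Yy^m\times\Yy^n$ in the two models coincide.

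First I would unpack the sampling operators. Since $\pb=\Id_{\Pp(\Yy)^\Xx}$ in the universal model, we have $\pb_{T_m}=\mathfrak m^m\circ E_{T_m}$ and $\pb_{X_n}=\mathfrak m^n\circ E_{X_n}$. Because $[T_m]\cup[X_n]\subseteq A$, each evaluation map factors as $E_{T_m}=E_{T_m}^A\circ R_A^\Xx$ and $E_{X_n}=E_{X_n}^A\circ R_A^\Xx$, where $E_{T_m}^A$, $E_{X_n}^A$ denote the evaluation maps on the restricted function space $\Pp(\Yy)^A$. Writing $\tilde\pb_{T_m}:=\mathfrak m^m\circ E_{T_m}^A$ and $\tilde\pb_{X_n}:=\mathfrak m^n\circ E_{X_n}^A$ (which are the sampling operators of the restricted model $(\Pp(\Yy)^A,(R_A^\Xx)_*\mu,\Id_{\Pp(\Yy)^A},\Pp(\Yy)^A)$), a pointwise check yields the identity
\[
\mathfrak m^2(\pb_{T_m},\pb_{X_n}) \;=\; \mathfrak m^2(\tilde\pb_{T_m},\tilde\pb_{X_n})\circ R_A^\Xx
\]
of measurable maps from $\Pp(\Yy)^\Xx$ into $\Pp(\Yy^m\times\Yy^n)$.

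Second, I would compare joint distributions using the functoriality \eqref{eq:giryf} of $P_*$ applied to the decomposition $\underline{\mathfrak m^2(\pb_{T_m},\pb_{X_n})}=\underline{\mathfrak m^2(\tilde\pb_{T_m},\tilde\pb_{X_n})}\circ\underline{R_A^\Xx}$ (which follows from the pointwise identity above together with \eqref{eq:tcompose}). This gives
\[
\bigl(\underline{\mathfrak m^2(\pb_{T_m},\pb_{X_n})}\bigr)_*\mu \;=\; \bigl(\underline{\mathfrak m^2(\tilde\pb_{T_m},\tilde\pb_{X_n})}\bigr)_*(R_A^\Xx)_*\mu
\]
as elements of $\Pp(\Yy^m\times\Yy^n)$, and correspondingly $(\underline{\pb_{X_n}})_*\mu=(\underline{\tilde\pb_{X_n}})_*(R_A^\Xx)_*\mu=(\underline{\Id_{X_n}})_*\mu$ as elements of $\Pp(\Yy^n)$.

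Finally, I would invoke Theorem~\ref{thm:posspred}(1) in both models: the map $Y_n\mapsto\Pp_{T_m|S_n(X_n,Y_n),\mu}$ in the universal model and the map $Y_n\mapsto\Pp_{T_m|S_n(X_n,Y_n),(R_A^\Xx)_*\mu}$ in the restricted model are both regular conditional probability measures of the \emph{same} joint distribution on $\Yy^m\times\Yy^n$ with respect to the projection $\Pi_{\Yy^n}$. Uniqueness in the weak sense, up to a $(\underline{\Id_{X_n}})_*\mu$-null set in $\Yy^n$, then yields \eqref{eq:res}. The only mildly delicate point is the change-of-variables step, but it is an immediate consequence of \eqref{eq:giryf} applied to the factorization above; no genuine obstacle is anticipated.
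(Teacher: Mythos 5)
Your proposal is correct and follows essentially the same route as the paper: both arguments factor the joint sampling operator through the restriction map $R_A^\Xx$ (your identity $\mathfrak m^2(\pb_{T_m},\pb_{X_n})=\mathfrak m^2(\tilde\pb_{T_m},\tilde\pb_{X_n})\circ R_A^\Xx$ is exactly the paper's $\Id_{T_m,X_n}=\Id^A_{T_m,X_n}\circ R_A^\Xx$), conclude via functoriality that the two models induce the same joint distribution on $\Yy^{m+n}$, and then invoke Theorem~\ref{thm:posspred}(1) together with a.s.\ uniqueness of regular conditional probability measures.
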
	
\begin{proof}
	 We consider  the following diagram
	 \begin{equation}\label{eq:res1}
	 	\xymatrix{
	 		& \big(\Pp (\Yy)^\Xx, \mu\big)\ar[d]^{R_A^\Xx}\ar@{~>}[ddl]_{\underline{\Id_{T_m}}}\ar@{~>}[ddr]^{\underline{\Id_{X_n }}} &\\
	 		& \ar@{~>}[dl]_{\underline{\Id^A_{T_m}}}\Pp (\Yy) ^A\ar@{~>}[d]\ar@{~>}[dr]^{\underline{\Id^A_{X_n}} }  & \\
	 		\Yy^m   &  \ar[l] ^{\Pi_{\Yy^m}} \Yy^{m+n} \ar[r]_{\Pi_{\Yy^n}}  & \Yy^n
	 	}
	 \end{equation}
	where  $\Id_{T_m}:=  \mathfrak m ^m \circ E_{T_m}:  \Pp (\Yy) ^\Xx \to \Pp (\Yy^m)$  and $\Id^A_{T_m}:=  \mathfrak m ^m \circ E_{T_m} :  \Pp (\Yy) ^A \to \Pp (\Yy^m)$. Similarly, we define    $\Id^A_{X_n}$.
	By Theorem \ref{thm:posspred},  $	\Pp _{ T_m| S_n(X_n, \cdot ), \mu}: \Yy ^n \to \Pp (\Yy ^m)  $  is a regular  conditional  probability measure  for the  joint  distribution $(\underline{\Id_{T_m, X_n}})_* \mu \in \Pp (\Yy^{m+n})$, and $\Pp_{T_m|S_n(X_n, \cdot), (R_A^\Xx)_*  \mu}: \Yy^n \to \Pp (\Yy^m)$ is   a regular conditional  probability measure  for the  joint  distribution $(\underline{\Id^A_{T_m, X_n}})_*\\  (R_A^\Xx)_*\mu \in \Pp (\Yy^{m+n})$.  To conclude  Corollary \ref{cor:psspred}, we note that 
	$$(\underline{\Id_{T_m, X_n}})_* \mu  = (\underline{\Id^A_{T_m, X_n}})_*  (R_A^\Xx)_*\mu$$
	since  $[T_m] \subset A$ and  $[X_n] \subset A$, hence  $\Id_{T_m, X_n} = \Id^A_{T_m, X_n} \circ R_A^\Xx$.   Therefore,     both the  Bayesian statistical supervised model $(\Pp (\Yy^\Xx, \mu,\Id_{\Pp(\Yy) ^\Xx} )$  and the  Bayesian statistical supervised model $(\Pp (\Yy^A,  (R^\Xx_A) _* \mu,\Id_{\Pp(\Yy) ^A} )$ induce  the same joint distribution  on $\Pp (\Yy^{m+n})$.  Taking into  account   Theorem  \ref{thm:posspred},    we obtain Corollary \ref{cor:psspred}.  
\end{proof}

\begin{remark}\label{rem:DDNS}
	The preceding construction may also be relevant for Bayesian inverse
	problems with function-space unknowns. In the Gaussian-prior setting,
	such problems are often reduced, via a parametric representation of the
	prior, to posterior expectations over a countable product Gaussian
	measure; see, for example, the framework of Dinh Dung, Nguyen, Schwab,
	and Zech \cite{DNSZ2023} for PDEs with Gaussian random field inputs.
	Our formulation is different in nature: it gives a general
	measure-theoretic construction of posterior predictive laws for Bayesian
	supervised learning over Souslin spaces.  Thus it may provide a nonparametric Bayesian layer for inverse or
	regression-type problems in situations where the unknown object is
	naturally a conditional law or probability kernel, rather than a single
	Gaussian-parametric field.
\end{remark}

Let us now consider   Bayesian  regression learning, which is a particular  case  of Bayesian  supervised   learning \cite[Definition 3.10]{Le2025}.    Let  $\Xx$ be an input space and $V$ is a separable  Hilbert space. 
We consider  a  corrupted   measurement 
		\begin{equation}\label{eq:corrupted}  y   = f(x) + \eps  (x) \in V, \qquad  f \in  (V^\Xx, \mu),\qquad \eps (x)  \in  (V, \nu_\eps (x))
		\end{equation}
		where  $\nu _\eps (x)  \in \Pp (V)$  for all $ x\in   \Xx$.
		We regard   $V^\Xx$  as    a   universal parameter  space  in the  Bayesian  supervised  leanring model  $(V^\Xx, \mu,  \pb   ^\eps , \Pp  (V)^\Xx)$   where
	\begin{align} \pb ^\eps  (f) :  =\delta_f * \nu _\eps , \nonumber\\
		  \delta_f * \nu _\eps  (x) : = \delta_{f(x)} * \nu _\eps (x)\label{eq:feps}
		\end{align}
		 for learning the corrupted measurements \eqref{eq:corrupted}.
		
		In the general case,   we consider     a quadruple  $(\Theta, \mu_\Theta, h, V^\Xx)$ where  $(\Theta, \mu_\Theta)$ is a  parameter  space with a  prior   probability measure $\mu_\Theta$,   and $h \in \Meas (\Theta, V^\Xx)$.  For $X_n = (x_1,\ldots , x_n )\in \Xx^n$,  the Markov kernel   $\mathfrak m ^n  \circ E_{X_n} \circ \pb ^ \eps \circ h :  \Theta  \to  \Pp  ( V^n)$      describes the sampling distribution  of the joint distribution  of $(y_1, \ldots,  y_n)$   where
		$y_i =  f(x_i)  +\eps (x_i)$. 
		
		Let $$\pb ^0 : V ^\Xx \to  \Pp (V^\Xx), f\mapsto \delta _f, $$
		 be  the Markov  kernel
		describing the sampling distribution of  uncorrupted  measurement. By Proposition \ref{prop:decompev}, for $T_m = (t_1, \ldots,   t_m) \in \Xx^m$,  the  predictive  distribution    of the    tuple   $(f(t_1),\ldots,  f(t_m)) $    after  seeing  $S_n \in  (\Xx \times \Yy)^n$ can be chosen as   the value $\qb ^n_m  (\Pi_\Yy (S_n)) \in \Pp  ( V^m)$  where $\qb  ^n_m : V^m \to V^n$ is a regular  conditional probability measure  for  the joint distribution   $(\underline{\mathfrak  m ^2 (\mathfrak m^m \circ E_{T_m} \circ \pb ^0\circ h, \mathfrak m^n \circ E_{\Pi_\Xx (S_n)} \circ \pb ^\eps \circ h)})_* \mu_\Theta\in \Pp  (V^m \times V^n
		)$.  
		
		   We  shall     abbreviate $\mathfrak m^m \circ E_{T_m} \circ \pb ^0\circ h$
		   as   $h^0 _{T_m}$, and  $\mathfrak m^m \circ E_{T_m} \circ \pb ^\eps\circ h$
		   as $h^\eps _{T_m}$.
		 The following theorem   for   Bayesian  regression  learning  is proved  in the same way     as    Theorem \ref{thm:posspred}, so we omit its proof.
		 
		 	\begin{theorem}[Posterior  predictive distribution with corrupted measurements]\label{thm:posspredr}
		 		\
		 		
		 		  Let   $ (\Theta, \mu_\Theta, h,  V^\Xx)$   be a   Bayesian model  for regression  learning,  with $y_i$  being  a corrupted   measurement   of $  f(x_i)$ for $i \in \overline{1, n}$,   and $T_m  =  (t_1, \ldots,  t_m)\in \Xx^m$.  For  $X_n =  (x_1, \ldots, x_n) \in \Xx^n$  and  $Y_n = (y_1, \ldots, y_n)\in V^n$  let $ S_n(X_n, Y_n) : = \big ((x_1, y_1), \ldots,  (x_n, y_n)\big) \in (\Xx\times \Yy)^n$.
		 	
		 	1)  The posterior  predictive  distribution
		 	$$\Pp^n_m:  V  ^n \to \Pp (V_m), \,  Y_n  \mapsto \Pp_{ T_m| S_n (X_n, Y_n), \mu_\Theta },  $$
		 	is a  regular  conditional probability measure  for the joint distribution   
		 	$$\mu ^0_{(T_m,S_n(X_n, Y_n), \mu_\Theta)}: =\underline{\mathfrak  m^2 (h^0_{T_m}, h^\eps_{\Pi _\Xx  (S_n)})}_* \mu_\Theta  \in \Pp  (V ^ m\times V^n)$$
		 	with respect to the projection  $\Pi_{ V^n} : V^m \times  V^n \to V^n$,
		 	 Hence, if  $\qb   ^n_m : V ^n   \to \Pp (V ^m)$  is a   regular  conditional probability measure  for the joint distribution $\mu ^0_{(T_m,S_n(X_n, Y_n), \mu_\Theta)}$ 	with respect to the projection  $\Pi_{ V^n}$
		 	then  $\qb  ^n_m \big(\Pi  _V  (S_n)\big)$ coincides  with    $\Pp_{T_m|S_n(X_n, Y_n),\mu_\Theta} \in \Pp (V^m)$ in the weak sense  up to a $(\underline {h^\eps_{X_n}})_*\mu_\Theta$-zero set.

		 	2)  The posterior  predictive distribution $\Pp_{T_m|S_n(X_n, Y_n),\mu_\Theta} \in \Pp (V^m)$ can be  computed  recursively  as  follows. 
		 	\begin{enumerate}\label{eq:predrecursiver}
		 		\item    Step 1:   Let     $\qb  ^1: V \to  \Pp  (V  ^{m +n -1})$ be a  regular  conditional  probability measure  for 
		 		the joint  distribution  $\mu ^0_{(T_m,S_n(X_n, Y_n), \mu_\Theta)} \in  \Pp (V ^{m +n})$
		 		with respect to the projection  $V ^{m +n } \to V$.  Then we set
		 		\begin{equation}\label{eq:predrecursive1r}
		 			\mu _{ (T_m,  S_n(X_n, Y_n), \mu_\Theta)}   ^ 1: =\qb   ^1  (y_n)\in \Pp (V^{m +n -1}).
	\end{equation}

		 		\item   Step $k+1$  for $1\le k \le  n-1$.  
		 		Let    $\qb  ^{k+1}: V \to  \Pp  (V  ^{m +n -k-1})$ be a  regular  conditional  probability measure  for 
		 		the joint  distribution   $ \mu ^{k} _{ (T_m, S_n(X_n, Y_n), \mu_\Theta)} \in \Pp  (V  ^{m+n -k})$ with respect  to the projection  $V^{m+n-k} \to V$. Then   we set
		 		\begin{equation}\label{eq:predrecursivekr}
		 			\mu _{ (T_m,  S_n(X_n, Y_n), \mu_\Theta)}   ^ {k+1}: =\qb   ^{k+1}  (y_{n-k})\in \Pp (V^{m +n -k-1}).
		 		\end{equation}
		 		
		 	\end{enumerate}
		 	
		 	Then    $	\mu_{(T_m, S_n(X_n, Y_n),  \mu_\Theta)} ^ {n}\in \Pp  (V ^m)$ is  the posterior predictive  distribution  $\Pp_{T_m|S_n(X_n, Y_n),\mu_\Theta} \in \Pp (V^m)$.
		 \end{theorem}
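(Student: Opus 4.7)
The plan is to mirror the proof of Theorem \ref{thm:posspred} verbatim, substituting the regression sampling operators throughout: the role of $\pb_{X_n} = \mathfrak{m}^n \circ E_{X_n} \circ \pb$ is played by $h^\eps_{X_n} = \mathfrak{m}^n \circ E_{X_n} \circ \pb^\eps \circ h$ on the training side, and by $h^0_{T_m} = \mathfrak{m}^m \circ E_{T_m} \circ \pb^0 \circ h$ on the prediction side. Since Proposition \ref{prop:decompev} and Theorem \ref{thm:binv} are stated for arbitrary measurable kernels $\pb_\Xx,\pb_\Yy \in \Meas(\Theta,\Pp(\cdot))$, they apply equally well to these regression kernels without modification, so the structural arguments transfer immediately.

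For part (1), I would apply Proposition \ref{prop:decompev} with $\pb_\Yy := h^\eps_{\Pi_\Xx(S_n)} : \Theta \to \Pp(V^n)$ and $\pb_\Xx := h^0_{T_m}: \Theta \to \Pp(V^m)$, where the roles of the ``observation'' side and ``prediction'' side in the diagram of Proposition \ref{prop:decompev} correspond to $V^n$ and $V^m$ respectively. Let $\qb_\Yy$ be any Bayesian inversion of $h^\eps_{\Pi_\Xx(S_n)}$ relative to $\mu_\Theta$. Then \eqref{eq:n+1g} of Proposition \ref{prop:decompev} gives
\[
(\underline{\mathfrak{m}^2(h^0_{T_m}, h^\eps_{\Pi_\Xx(S_n)})})_* \mu_\Theta
= \big(\Gamma_{\underline{h^0_{T_m}} \circ \underline{\qb_\Yy}}\big)_* (\underline{h^\eps_{\Pi_\Xx(S_n)}})_* \mu_\Theta ,
\]
which by \eqref{eq:pgraph} identifies $(\underline{h^0_{T_m}})_* \circ \qb_\Yy$ as a regular conditional probability measure of $\mu^0_{(T_m,S_n,\mu_\Theta)}$ along $\Pi_{V^n}$. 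Comparing with Definition \ref{def:fposteriord}(3) applied to the regression model, this value equals $\Pp_{T_m|S_n(X_n,Y_n),\mu_\Theta}$ for $(\underline{h^\eps_{X_n}})_*\mu_\Theta$-a.e.\ $Y_n$, which is the first claim.

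For part (2), I would carry out the peeling construction of the proof of Theorem \ref{thm:posspred} step-by-step in the regression setting. At step $1$, I apply Proposition \ref{prop:decompev} with the triangle whose apex is $(\Theta,\mu_\Theta)$ and whose legs are $\underline{h^\eps_{x_n}}$, $\underline{\mathfrak{m}^2(h^0_{T_m}, h^\eps_{\Pi_\Xx(S_{n-1})})}$ and their product, identifying $\mu^1_{(T_m,S_n,\mu_\Theta)} = \qb^1(y_n)$ with $\Pp_{T_m|(x_n,y_n),\mu_\Theta}$. At step $k+1$, I invoke Theorem \ref{thm:binv} to see that the posterior $\mu_{\Theta | (x_n,y_n),\ldots,(x_{n-k+1},y_{n-k+1})}$ equals the sequential inversion obtained by processing these observations one at a time, and then I apply Proposition \ref{prop:decompev} again with parameter space $(\Theta,\mu_{\Theta|\ldots})$ and kernels $h^\eps_{x_{n-k}}$ and $\mathfrak{m}^2(h^0_{T_m}, h^\eps_{\Pi_\Xx(S_{n-k-1})})$. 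After $n$ iterations we have conditioned on the entire training sample $S_n$ and are left with the marginal on $V^m$, which is $\Pp_{T_m|S_n,\mu_\Theta}$.

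The main obstacle in the argument is conceptual rather than computational: one must check that the ``prediction kernel'' $h^0_{T_m}$ on the $V^m$-factor plays a purely passive role throughout the recursion, never being conditioned upon, while the ``corruption kernel'' $h^\eps$ on successive $V$-factors of the $V^n$-side drives the Bayesian updating. This bookkeeping is exactly what the diagrammatic formulation of Proposition \ref{prop:decompev} and the graph calculus \eqref{eq:graphcomp}–\eqref{eq:tdecomp} encode, so the proof reduces to iterating these two ingredients. The only substantive invocation beyond Theorem \ref{thm:posspred} is to observe that the identities from Theorem \ref{thm:binv} apply to the kernel $\pb^\eps \circ h: \Theta \to \Pp(V)^\Xx$, since that theorem is stated for arbitrary Bayesian supervised learning models. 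For this reason no new difficulty arises, and the proof is omitted, as stated.
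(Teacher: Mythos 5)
Your proposal is correct and follows exactly the route the paper intends: the paper explicitly omits this proof on the grounds that it is ``proved in the same way as Theorem~\ref{thm:posspred}'', and your argument is precisely that proof with $h^\eps_{\Pi_\Xx(S_n)}$ and $h^0_{T_m}$ substituted for the sampling kernels, using that Proposition~\ref{prop:decompev} and Theorem~\ref{thm:binv} place no restriction tying the two kernels together. The one point you rightly single out --- that the uncorrupted prediction kernel $h^0_{T_m}$ is never conditioned upon while the corrupted kernel drives the updating --- is exactly the bookkeeping the paper's diagrammatic argument handles, so no new content is needed.
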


			\begin{example}[Gaussian process regression]\label{ex:Gauss}   We  illustrate    Theorem \ref{thm:posspredr}  with Gaussian process regression   model   $(\R ^\Xx, \Gg\Pp (m, K), \pb ^\eps,  \Pp (\R)^\Xx)$, where    $\Gg \Pp (m, k)$  is a  Gaussian measure  on $R^\Xx$ defined  by  its mean function $m \in \R^\Xx$ and its covariance  function $K: \Xx \times \Xx \to \R$, which is a  positive  definite kernel.      Let $\mu$ be  a Gaussian measure  on the  function  space $R^\Xx$  where $\Xx$ is  an input  space and $V = \R$. One  sees immediately that    the recipe  for computing    the  posterior  predictive distribution $\Pp_{ T_m|  S_n, \Gg \Pp (m, K)}$ in  Theorem  \ref{thm:posspredr}(1) coincides  with the classical  formula  for  posterior   predictive  distributions   in  Gaussian  process  regression    described in  \cite{RW2006}.  Furthermore,  the   recursive  formula
				 in Theorem \ref{thm:posspredr}(2)  is much simpler  and faster  than   the classical  formula  since  it    does not require  computing  the (pseudo) inverse  of a  square matrix of  size $ (n\times n)$  associated  with the kernel     $K^\eps_n: \R ^n \times \R^n \to \R$  which is the variance of the Gaussian  measure that governs  the  distribution  of  $ y_i = f(x_i) +\eps(x_i) \in  \R$, $i \in \overline{1, n}$,  and a  multiplication of matrices of size $(m\times  n) $  with this  pseudo inverse and with   a matrix   of  size $(n\times m)$  \cite[Appendix A]{Stein1999},  \cite[Chapter 8]{Rao2002}. Instead,  we have to compute        $n$-round of multiplications  of  matrices  of   size    $m +n -i \times 1$ with a matrix  of   size $  1\times m +n -i$ for  $i \in \overline {1, n}$.  This sequential update procedure is known to be equivalent to the celebrated Kalman filter update equations,  see, e.g.,  \cite[\S 6.3]{SS2023}.				 

	\end{example}

	\section{Probability measures on $\Pp (\Yy)^\Xx$}\label{sec:bsuperv}

	In this Section,  we     assume that  $\Yy$ is  a Souslin metrizable space unless otherwise stated.    Recall  that  $\Pp (\Yy)^k$  and $\Pp (\Pp (\Yy)^ k)$  are Souslin  metrizable spaces for any $k \in \N^+$. We shall  extend  Orbanz's  description  of   the space  $\Pp ^2 (\Yy)$  for a Polish space  $\Yy$ \cite[Theorem 1.1]{Orbanz2011},     to   a description  of  the  space $\Pp (\Pp (\Yy)^\Xx)$, where $\Yy$ is    Souslin metrizable  space and $\Xx$   is an arbitrary index  set  using a projective system (Theorem \ref{thm:uniprior}). 
	
	For a   set $\Xx$ we denote by $\mathrm{P_{fin}}(\Xx)$ the  directed set  of finite subsets of $\Xx$.  Our projective  system is a product  of two projective  systems.
	The first  projective  system is  associated  to   the restriction maps 
	$$R_{X_m}^{X_n}:\Pp(\Yy)^{X_n} \to \Pp (\Yy) ^{X_m} \text{ if }  X_m \le X_n  \in \mathrm{P_{fin}}(\Xx).$$
	Recall that $R^\Xx_{X_m}$ denotes the  restriction map $\Pp (\Yy) ^\Xx \to \Pp (\Yy) ^{\Xx_m}$.

	\begin{lemma}\label{lem:exist1}
		Let $\Yy$ be a  Souslin space.  Then for   any   set $\Xx$ and  $\mu \in \Pp  ( \Pp (\Yy)^\Xx)$
		we have
	\begin{equation}\label{eq:exist1}
			\mu = \lim_{\stackrel{\leftarrow}{X_m \in  \mathrm{P_{fin}}(\Xx)}}  (R^\Xx_{X_m} )_* \mu.
\end{equation}
		Conversely,   for any projective  system  of probability spaces  $\{ (\Pp (\Yy)^{X_m}, \mu_{X_m}),  R_{X_m}^{X_n} : X_m  \le X_n\in \mathrm{P_{fin}}(\Xx)\}$  there  exists  a unique  probability measure
		$\mu \in \Pp \big (\Pp (\Yy)^\Xx \big)$ such that   for all $X_m \in  \mathrm{P_{fin}}(\Xx)$ we have
		$$\mu_{X_m} =   (R^\Xx_{X_m} )_* \mu .$$
	\end{lemma}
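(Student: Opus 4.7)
The plan is to recognize Lemma~\ref{lem:exist1} as an application of the Kolmogorov extension theorem to the projective system of Borel probability measures on the Polish spaces $\Pp(\Yy)^{X_m}$ indexed by the directed set $\mathrm{P_{fin}}(\Xx)$. The Polish hypothesis on $\Yy$ is used exactly to guarantee that $\Pp(\Yy)$ with the weak* topology is Polish (so its Borel $\sigma$-algebra equals $\Sigma_w$), and consequently every finite power $\Pp(\Yy)^{X_m}$ is Polish as well.

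For the first assertion, I would start from the definition of the cylindrical $\sigma$-algebra on $\Pp(\Yy)^\Xx$: it is generated by the preimages $(R^\Xx_{X_m})^{-1}(B)$ where $X_m \in \mathrm{P_{fin}}(\Xx)$ and $B$ is Borel in $\Pp(\Yy)^{X_m}$. This family is closed under finite intersections (a $\pi$-system), so by the standard $\pi$-$\lambda$ argument any probability measure on $\Pp(\Yy)^\Xx$ is uniquely determined by its values on cylinders. These values are recorded precisely by the pushforwards $(R^\Xx_{X_m})_*\mu$. The factorization $R^\Xx_{X_m} = R^{X_n}_{X_m}\circ R^\Xx_{X_n}$ for $X_m \le X_n$ shows that $\{(R^\Xx_{X_m})_*\mu\}$ is a projective system and that $\mu$ realizes its projective limit, yielding \eqref{eq:exist1}.

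For the converse, given a consistent projective system $\{(\Pp(\Yy)^{X_m},\mu_{X_m}), R^{X_n}_{X_m}\}$, the compatibility condition $(R^{X_n}_{X_m})_*\mu_{X_n} = \mu_{X_m}$ is precisely the Kolmogorov consistency condition for the product $\prod_{x\in\Xx}\Pp(\Yy)$. I would then invoke the Kolmogorov extension theorem in its form for arbitrary (possibly uncountable) index sets over Polish (equivalently, standard Borel) factors, which produces a unique probability measure $\mu$ on $\big(\Pp(\Yy)^\Xx,\Sigma_{cyl}\big)$ whose finite-dimensional marginals are the $\mu_{X_m}$. Uniqueness of $\mu$ then follows from the first part, since any two measures inducing the same projective system must coincide on the generating $\pi$-system and hence on $\Sigma_{cyl}$.

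The main obstacle is legitimately invoking the Kolmogorov extension theorem for an uncountable index set $\Xx$; this is the step where the Polish assumption on $\Yy$ is essential, since it provides the standard Borel structure on each factor that is needed to extend a consistent family of finite-dimensional marginals to a genuine measure on the product. Once this classical result is in hand, the remainder of the argument is a routine bookkeeping check that restriction maps transport measures compatibly, and that cylinder sets form a generating $\pi$-system.
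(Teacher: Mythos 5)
Your proposal is correct and follows exactly the route the paper takes: the paper's own proof simply declares the first assertion straightforward (your $\pi$-system/cylinder argument is the standard way to make this precise) and invokes the Kolmogorov extension theorem for the converse, which is precisely where you place the weight of the Polish hypothesis. Your write-up merely supplies the details the paper omits.
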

	\begin{proof}   The first assertion is straightforward.  The second assertion   is obtained by  applying the Kolmogorov  extension theorem  \cite[Corollary 7.7.2, p. 96]{Bogachev2007}.
	\end{proof}

	Next we shall   study  another projective  system  associated  with  partitions of a Souslin space $\Yy$. 
	
	 Let  $\Aa_\Yy$ be  the countable  algebra  generated by open balls with   rational radius  centered  at
	a countable dense  set  in $\Yy$.  Then  $\Aa_\Yy$  generates  the Borel  $\sigma$-algebra of  $\Yy$.   Set
	$$ \Hh (\Aa_\Yy):  = \{  (A): = (A_1, \ldots, A_n):  A_i \in \Aa_\Yy, \, \dot \cup A_i = \Yy\} . $$
	
	Note  that $\big(\Hh (\Aa_\Yy), \le \big)$  is a  directed   subset of $\big (\pi (\Yy), \le \big)$.

	\begin{remark}\label{rem:orbanz} 	
		Using \eqref{eq:compose1}  one  observes  that the collection
		$$ \{\Pp (\Om_{(A)}),  (\pi ^{(A)}_{(B)})_*:\Pp (\Om_{(A)}) \to \Pp (\Om_{(B)}),  \, (B)\le (A) \in \Hh (\Aa_\Yy)\}$$ forms  a projective system  of topological spaces.
		\end{remark}
	
For a measurable   space $\Yy$  and  $k \in \N^+$,
denote by  
$$\widehat ev_\Yy ^{(k)}: \Pp  \big (\Pp (\Yy)^k\big) \to \Pp (\Yy^k) $$ the {\it marginalization map}, 
\begin{equation}\label{eq:evS} 
	\widehat{ev}_\Yy^{(k)}( \nu) (B_1 \times \ldots \times B_k): = \int_{\Pp(\Yy)^k}\mathfrak m ^k \mu (B_1 \times \ldots \times B_k) \,  d\nu (\mu)
\end{equation}
for   $\nu  \in \Pp\big (\Pp(\Yy)^k\big)$ and $B_i \in \Sigma_\Yy$, $i \in \overline{1, k}$. 
Similarly, for a  finite set  $X_m : = \{ x_1, \ldots, x_m\}$   we denote by
$$\widehat ev_\Yy ^{X_m}: \Pp  \big (\Pp (\Yy)^{X_m}\big) \to \Pp (\Yy^{X_m})$$ the {\it marginalization map}, 
\begin{equation}\label{eq:evSm} 
	\widehat{ev}_\Yy^{X_m}( \nu) (B_1 \times \ldots \times B_m): = \int_{\Pp(\Yy)^{X_m}}\mathfrak m ^m\mu (B_1 \times \ldots \times B_m) \,  d\nu (\mu)
\end{equation}
for   $\nu  \in \Pp\big (\Pp(\Yy)^{X_m}\big)$ and $B_i \in \Sigma_{\Yy ^{\{ x_i\}}}$, $i \in \overline{1, m}$. 

	\begin{lemma}\label{lem:unim}  
			1)   Recall  that $\mathfrak  m ^k : \Pp (\Yy)^k \to \Pp (\Yy^k) $ is the  multiplication map.   Then we have
		\begin{equation}\label{eq:evev}
			\widehat{ev}^{(k)}_\Yy = P_*  \underline{\mathfrak m ^k}.
		\end{equation}
		2) The map  $\widehat{ev}_\Yy^{(k)}$ is measurable.

		3) Assume that $\Yy$ is a  Souslin space.  Given a   set  $\Xx$    and  $\nu \in \Pp  (\Pp (\Yy) ^\Xx)$  the following formula for  the marginalization $\widehat{ev}_\Yy ^\Xx (\nu)$ of $\nu$,
		\begin{equation}\label{eq:margin}\widehat{ev}_\Yy ^\Xx (\nu): =  \lim_{\stackrel{\leftarrow}{X_m \in \mathrm{P_{fin}} (\Xx)}} \widehat{ev}_\Yy ^{X_m} \big ((R^\Xx_{X_m})_*\nu\big)  \in \Pp (\Yy ^\Xx),
		\end{equation}
		is well-defined.
	\end{lemma}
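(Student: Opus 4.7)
The plan is to reduce parts (1) and (2) to the general formalism of probabilistic morphisms, and to derive part (3) from a simple projective compatibility together with Kolmogorov's extension theorem.

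For part (1), I would unfold definition \eqref{eq:Mhomomorphism} applied to the probabilistic morphism $\underline{\mathfrak m^k}:\Pp(\Yy)^k\leadsto \Yy^k$, whose generating measurable map is $\mathfrak m^k$ itself. For any $\nu\in\Pp(\Pp(\Yy)^k)$ and $B\in\Sigma_{\Yy^k}$,
\[
P_*\underline{\mathfrak m^k}(\nu)(B) \;=\; \int_{\Pp(\Yy)^k}\overline{\underline{\mathfrak m^k}}(\mu)(B)\,d\nu(\mu) \;=\; \int_{\Pp(\Yy)^k}\mathfrak m^k(\mu)(B)\,d\nu(\mu),
\]
since $\overline{\underline{\mathfrak m^k}}=\mathfrak m^k$. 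Specializing $B$ to a measurable rectangle $B_1\times\cdots\times B_k$ recovers \eqref{eq:evS} exactly. A standard $\pi$--$\lambda$ argument then identifies the two Borel probability measures on $\Yy^k$. Part (2) is then immediate: measurability of $P_*T$ for any probabilistic morphism $T$, and its $(\tau_w,\tau_w)$-continuity when both domain and codomain are Polish, are the two assertions cited just after equation \eqref{eq:Mhomomorphism}.

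For part (3), I would first establish the key commutation: for any $X_m\le X_n$ in $\mathrm{P_{fin}}(\Xx)$, writing $R^{X_n}_{X_m}$ both for the coordinate restriction $\Yy^{X_n}\to\Yy^{X_m}$ and for the induced restriction $\Pp(\Yy)^{X_n}\to\Pp(\Yy)^{X_m}$, one has for every $h\in\Pp(\Yy)^{X_n}$
\[
(R^{X_n}_{X_m})_*\,\mathfrak m^n(h) \;=\; \mathfrak m^m\!\big(R^{X_n}_{X_m}(h)\big).
\]
This is a direct verification on measurable rectangles: $\mathfrak m^n(h)=\bigotimes_{x\in X_n}h(x)$ is a product measure, and the marginal of a product measure under a coordinate projection is again the corresponding product. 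Integrating this identity against $(R^\Xx_{X_n})_*\nu\in\Pp(\Pp(\Yy)^{X_n})$, using the tower identity $R^{X_n}_{X_m}\circ R^\Xx_{X_n}=R^\Xx_{X_m}$, and applying part (1) on each finite $X_m$, yields the consistency
\[
(R^{X_n}_{X_m})_*\widehat{ev}_\Yy^{X_n}\!\big((R^\Xx_{X_n})_*\nu\big) \;=\; \widehat{ev}_\Yy^{X_m}\!\big((R^\Xx_{X_m})_*\nu\big).
\]
Thus the family $\{\widehat{ev}_\Yy^{X_m}((R^\Xx_{X_m})_*\nu):X_m\in\mathrm{P_{fin}}(\Xx)\}$ is a projective system of Borel probability measures on the Polish spaces $\Yy^{X_m}$. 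The Kolmogorov extension theorem for arbitrary index sets with Polish factors then produces a unique Borel (equivalently, cylindrical) probability measure on $\Yy^\Xx$ realizing all these finite-dimensional marginals; this measure is exactly $\widehat{ev}_\Yy^\Xx(\nu)$ of \eqref{eq:margin}.

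The main obstacle is the commutation of the multiplication map $\mathfrak m^\bullet$ with coordinate restriction; everything else is bookkeeping plus the invocation of Kolmogorov's theorem. Once this commutation is in hand, functoriality of $P_*$ and of the pushforward on measures turns the verification of projective compatibility into a one-line computation, and no topological or domination hypothesis on $\nu$ is required beyond $\Yy$ being Polish.
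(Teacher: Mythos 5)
Your proposal is correct and follows essentially the same route as the paper: part (1) by unfolding \eqref{eq:Mhomomorphism} for $\underline{\mathfrak m^k}$, part (2) by citing the measurability/continuity of $P_*T$ from Giry's theorem, and part (3) by the commutation of the multiplication map with coordinate restriction (the paper's diagram \eqref{eq:proj0}), functoriality of $P_*$, and the Kolmogorov extension theorem. Your write-up merely makes explicit the $\pi$--$\lambda$ identification in (1) and the projective-consistency computation in (3), which the paper leaves as ``verify immediately.''
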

	
	\begin{proof}  (1)  Equation \eqref{eq:evev} follows   directly from \eqref{eq:evS} and Equation \eqref{eq:Mhomomorphism}.
	
		(2)  The second  assertion follows from the first one,  taking into account  the measurability of  $P_* \underline{\mathfrak m^k} $    by \cite[Theorem 2]{JLT2021}.

	3) For $X_m \le X_n \in \mathrm{P_{fin}}(\Xx)$
	let  $r^\Xx_{X_m}: \Yy^\Xx \to \Yy^{X_m}$   and $r^{X_n}_{X_m}: \Yy^{X_n} \to \Yy^{X_m}$  denote the restriction maps.  
	
	By definition of the multiplication map $\mathfrak m ^{X_n}: \Pp  (\Yy) ^{X_n} \to \Pp (\Yy ^{X_n})$,   we   verify immediately  that the following   diagram
	\begin{equation}\label{eq:proj0}
		\xymatrix{
			\Pp (\Yy)^{X_n}\ar[r]^{R^{X_n}_{X_m}}
			\ar@{~>}[d]^{\underline{\mathfrak m ^{X_n}}} & 
			\Pp (\Yy)^{X_m}\ar@{~>}[d]^{\underline{\mathfrak m ^{X_m}}} \\
			\Yy^{X_n}	\ar[r]^{r^{X_n}_{X_m}} & \Yy^{X_m}
		}
	\end{equation}
	is commutative. Consequently,  the 
	  following diagram  for any $X_m \le X_n \in \mathrm{P_{fin}} (\Xx)$:
	\begin{equation}\label{eq:proj1}
		\xymatrix{
			\Pp \big(\Pp (\Yy)^{X_n}\big)\ar[r]^{P_*(R^{X_n}_{X_m})}
			\ar[d]^{P_*\underline{\mathfrak m ^{X_n}}} & 
			\Pp \big(\Pp (\Yy)^{X_m}\big)\ar[d]^{P_*\underline{\mathfrak m ^{X_m}}} \\
			\Pp (\Yy^{X_n})	\ar[r]^{(r^{X_n}_{X_m})_*} & \Pp (\Yy^{X_m})	
		}
	\end{equation}
	is commutative.   Combining the  commutativity  of  the diagram \eqref{eq:proj1}, Equation \eqref{eq:evev},  and the second   assertion of  Lemma \ref{lem:unim}, we  complete  the  proof    of Lemma \ref{lem:unim},  taking into account  the Kolmogorov extension theorem.     	
	\end{proof}
	
	 Let $\Yy$ be a Souslin space and $\Xx$  a set.  For  $ (B) \le (A)  \in \Hh (\Aa_\Yy)$, and for $X_m \le X_n \in \mathrm{P_{fin}} (\Xx)$ we denote by
	 $$ R^{X_n} _{(A),  X_m}:  \Om_{(A)} ^{X_n} \to \Om _{(A)} ^{X_m}$$
	 the restriction map, and by
	 $$\pi ^{(A), X_m}_{ (B)}: \Om_{(A)} ^{X_m} \to \Om_{ (B)} ^{X_m}$$
	 the natural  projection map.
	 	
	\begin{theorem}\label{thm:uniprior}  Assume  that $\Yy$ is a Souslin space and
		$\Aa_\Yy$  is the  countable algebra generating  $\Bb (\Yy)$ defined above.  Then	 for  any $\nu \in \Pp (\Pp (\Yy)^\Xx)$ we have
		\begin{equation}\label{eq:uniprior}
			\nu = \lim_{\stackrel{\leftarrow}{X_m \in \mathrm{P_{fin}} (\Xx)}}  \lim_{\stackrel{\longleftarrow}{(A)\in \Hh (A_\Yy)}}  P^2_*\pi_{(A)}^{X_m}  (R^\Xx_{X_m})_*\nu
		\end{equation} 
		and 
		\begin{equation}\label{eq:limmar}
			\widehat {ev}_{\Yy}^\Xx  (\nu)  =  \lim_{\stackrel{\leftarrow}{X_m \in \mathrm{P_{fin}} (\Xx)}} \lim_{\stackrel{\longleftarrow}{(A)\in \Hh (\Aa_\Yy)}}P_*\pi_{(A)}^{X_m}  (r^\Xx_{X_m})_*\widehat {ev}_{\Yy}^\Xx  (\nu) . 
		\end{equation}
	
		Conversely,  given   a   projective  system  of finite sample  spaces  endowed  with  second order   probability  measures
		$$\Big \{ \Big(\Om^{X_m}_{(A)}, \nu_{(A)}^{X_m}\in \Pp \big(\Pp (\Om _{(A)})^{X_m}\big)\Big): X_m \in \mathrm {P_{fin}}  (\Xx), A \in	 \Hh(\Aa_\Yy)\Big\}$$
		and   induced  projection maps
		$$ \Big\{ (R_{(A), X_m}^{X_n})_*: \Pp (\Om_{(A)}) ^{X_n} \to \Pp ( \Om_{(A)} )^{X_m}, $$
		$$  P_*(R_{(A), X_m}^{X_n})_*:\Pp 
		\big( \Pp (\Om_{|A|}) ^{X_n}\big) \to \Pp \big (\Pp ( \Om_{(A)}) ^{X_m}\big),$$  
		$$ P_* ^2 \pi^{ (A), X_m}_{(B)}:\Pp  \big(\Pp (\Om_{ (A)}) ^{X_m}\big) \to \Pp \big  (\Pp (\Om_{(B)}) ^{X_m}\big)|\: $$
		$$  X_m \le X_n  \in  \mathrm{P_{fin}} (\Xx),\,   (B) \le (A)\in \Hh (\Aa_\Yy)\Big\}  $$
		there exists   $\nu \in  \Pp \big(\Pp (\Yy)^\Xx\big)$  such that  for any $ (A)\in \Hh (\Aa_\Yy)$ and $X_m \in \mathrm{P_{fin}}(\Xx)$ we have
		\begin{equation}\label{eq:limp2}
			\nu_{(A)}^{X_m} = P^2_*\pi_{(A)}^{X_m}  (R^\Xx_{X_m}) _*\nu \in \Pp (\Pp (\Om _{ (A)})^{X_m})
		\end{equation}
		if and only if  there  exists   $\mu \in \Pp (\Yy^\Xx)$ such that   for   any  $X_m \in \mathrm{P_{fin}} (\Xx) $ and any $ (A) \in \Hh (\Aa_\Yy)$ we have
		\begin{equation}\label{eq:orbanzm}\widehat{ev}_{\Om _{(A)}} ^{(m)}\big (\nu ^{X_m} _{ (A)}\big) = P_* \pi_{(A)}^{X_m}  (r^\Xx_{X_m})_*    \mu \in \Pp (\Om_{(A)}^{X_m}). 
		\end{equation}
		Equivalently, there exists   $\nu \in  \Pp \big(\Pp (\Yy)^\Xx\big)$   such that \eqref{eq:limp2}  holds,   if and only if    for   each  $X_m \in \mathrm{P_{fin}} (\Xx) $  there exists  $\mu_{X_m} \in \Pp (\Yy^{\Xx_m})$  such  that for  any $ (A) \in \Hh (\Aa_\Yy)$ we have
		\begin{equation}\label{eq:orbanzma}\widehat{ev}_{\Om _{(A)}} ^{(m)}\big (\nu ^{X_m} _{ (A)}\big) = P_* \pi_{(A)}^{X_m}  ( \mu_{X_m}) \in \Pp (\Om_{(A)}^{X_m}), 
		\end{equation} 
		 where  the   system $\{r_{X^m}^{X_n}: (\Yy ^{X_n}, \mu_{X_n}) \to  (\Yy ^{X_m},\mu_{X_m}) , \, X_m \le X_n \in \mathrm{P_{fin} (\Xx)} \}$ of probability spaces  is projective.
	\end{theorem}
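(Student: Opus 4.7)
The proof naturally splits along the two projective limits appearing in the statement: an \emph{outer} limit over finite index sets $X_m \in \mathrm{P_{fin}}(\Xx)$, which is governed by Lemma \ref{lem:exist1} (Kolmogorov extension in the index variable), and an \emph{inner} limit over finite measurable partitions $(A) \in \Hh(\Aa_\Yy)$, which is a second-order analogue of Orbanz's description recalled in Proposition \ref{prop:orbanz}. The forward direction (formulas \eqref{eq:uniprior} and \eqref{eq:limmar}) is obtained by iterating these two reconstructions; the converse is a more delicate two-step existence argument in which the key point is ensuring that the Bochner limit of the inner system produces an honest measure on $\Pp(\Yy)^{X_m}$ rather than a measure on the larger space of $X_m$-tuples of probability charges.

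For the forward direction, I would first apply Lemma \ref{lem:exist1} directly to $\nu \in \Pp(\Pp(\Yy)^\Xx)$ to get $\nu = \lim_\leftarrow (R^\Xx_{X_m})_*\nu$. Fixing $X_m$ and writing $\nu_{X_m} := (R^\Xx_{X_m})_*\nu \in \Pp(\Pp(\Yy)^{X_m})$, I would lift Proposition \ref{prop:orbanz} from $\Yy$ to the Polish power $\Pp(\Yy)^{X_m}$: the compatibility relation $\pi_{(B)} = \pi^{(A)}_{(B)} \circ \pi_{(A)}$ in \eqref{eq:compose1} yields, after applying $P_*$ coordinate-wise and then $P_*^2$, a countable projective system whose Bochner limit reproduces $\nu_{X_m}$. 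Combining the inner and outer limits yields \eqref{eq:uniprior}. For \eqref{eq:limmar}, I would apply the marginalization $\widehat{ev}^\Xx_\Yy$ of Lemma \ref{lem:unim} and use the commutative diagram \eqref{eq:proj1}, together with the identity $\widehat{ev}^{(m)}_\Yy = P_*\underline{\mathfrak m^m}$ from \eqref{eq:evev}, to interchange marginalization with the two projective limits.

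For the converse, given a compatible system $\{\nu^{X_m}_{(A)}\}$ I would work from the inside out. For each fixed $X_m$, Bochner's theorem applied to the countable directed set $\Hh(\Aa_\Yy)$ produces a measure on the projective limit of the spaces $\Pp(\Om_{(A)})^{X_m}$; by Proposition \ref{prop:orbanz}(1) this limit is the space of $X_m$-tuples of probability charges on $\Aa_\Yy$, which strictly contains $\Pp(\Yy)^{X_m}$. Condition \eqref{eq:orbanzm} (equivalently \eqref{eq:orbanzma}) is exactly what promotes this charge-valued measure to an element of $\Pp(\Pp(\Yy)^{X_m})$: the hypothesis says that the $\widehat{ev}$-marginals of the $\nu^{X_m}_{(A)}$ coincide with the finite-partition discretizations of a genuine probability measure $\mu_{X_m}$ on the Polish space $\Yy^{X_m}$, and the resulting $\sigma$-additivity along the partition system forces the Bochner limit to concentrate on the measurable subset $\Pp(\Yy)^{X_m}$ identified in Proposition \ref{prop:orbanz}(2),(3). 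Once $\nu_{X_m}$ is constructed for each $X_m$, the compatibility of the original system across $X_m \le X_n$ passes to the $\nu_{X_m}$, and a second application of Lemma \ref{lem:exist1} assembles them into the desired $\nu \in \Pp(\Pp(\Yy)^\Xx)$. The equivalence of \eqref{eq:orbanzm} and \eqref{eq:orbanzma} itself is a direct application of Lemma \ref{lem:exist1} to the system of restriction maps $r^{X_n}_{X_m}\colon \Yy^{X_n} \to \Yy^{X_m}$.

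The main obstacle is precisely the step just described: showing that the inner Bochner limit actually lies in $\Pp(\Pp(\Yy)^{X_m})$ rather than in the larger $\Pp(\Cc_{\Aa_\Yy}^{X_m})$. This is the second-order, product-valued version of the phenomenon isolated by Orbanz, and the role of the marginal condition \eqref{eq:orbanzm} is to supply the tightness required for the promotion. Once this point is settled, the rest of the argument is essentially bookkeeping in nested Kolmogorov extensions and chasing the commutative diagrams \eqref{eq:proj0} and \eqref{eq:proj1}.
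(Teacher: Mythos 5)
Your proposal is correct in its overall architecture and agrees with the paper on the forward direction (functoriality of $P_*$ plus Lemma \ref{lem:exist1} and Lemma \ref{lem:unim}) and on the outer Kolmogorov-extension step and the equivalence of \eqref{eq:orbanzm} with \eqref{eq:orbanzma}. Where you genuinely diverge is in the key converse step for fixed $X_n$. You run the second-order Bochner limit directly on the system $\{\Pp(\Om_{(A)})^{X_n}\}_{(A)\in\Hh(\Aa_\Yy)}$, land in $\Pp\big(\Cc_{\Aa_\Yy}^{X_n}\big)$, and then argue that the marginal condition \eqref{eq:orbanzm} forces concentration on the measurable subset $\Pp(\Yy)^{X_n}$ --- i.e.\ you must reprove the product-valued, second-order analogue of Orbanz's concentration argument (a.s.\ $\sigma$-additivity coordinate by coordinate, which does follow from \eqref{eq:orbanzm} by restricting to cylinder sets, but which your sketch asserts rather than carries out). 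The paper instead pushes the whole system forward along the multiplication map $\mathfrak m^n:\Pp(\Om_{(A)})^{X_n}\to\Pp(\Om_{(A)}^{X_n})$, so that the inner limit becomes an instance of Orbanz's Theorem 1.1 applied verbatim to the single Polish space $\Yy^{X_n}$, producing $\tilde\nu_{X_n}\in\Pp^2(\Yy^{X_n})$; it then descends back to $\Pp(\Pp(\Yy)^{X_n})$ using the injectivity of $\mathfrak m^n$ together with Lemma \ref{lem:uniprior}, which exhibits $\mathfrak m^n(\Pp(\Yy)^{X_n})$ as a measurable subset of $\Pp(\Yy^{X_n})$ cut out by countably many product identities over $\Aa_\Yy$. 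The trade-off: your route avoids the descent along $\mathfrak m^n$ but obliges you to redo the hard analytic step in the product setting, while the paper's route uses Orbanz as a black box at the cost of the extra measurable-image/injectivity bookkeeping (and of checking that $\tilde\nu_{X_n}$ is indeed supported on $\mathfrak m^n(\Pp(\Yy)^{X_n})$, which follows because the finite-dimensional projections are supported on product measures). Both are viable; if you pursue yours, make the concentration step explicit, since it is exactly the point where the argument could otherwise fail.
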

	
	\begin{remark}\label{rem:uniprior}  (1)  For the case $\Xx $ consists of  one  element,  Theorem \ref{thm:uniprior} is   due to Orbanz \cite[Theorem 1.1]{Orbanz2011}.
		
		(2)  Any  $\mu \in \Pp (\Yy^\Xx)$   can be  written as
		$$\mu = \lim_{ \stackrel{\leftarrow}{X_m \in \mathrm{P_{fin}} (\Xx) }}  (r^\Xx_{X_m})_* (\mu).$$
		By the Kolmogorov  extension theorem,  we can   replace $(r^\Xx_{X_m})_* \mu$ in \eqref{eq:orbanzm}  by $\mu_{X_m}$ in \eqref{eq:orbanzma} in  the presence  of  the corresponding  projective  system, which is the content of the last  ``equivalence" assertion of Theorem \ref{thm:uniprior}.
	\end{remark}
	
	     The first  step on our  proof  of  Theorem \ref{thm:uniprior} is to  generalize  \cite[Theorem 1.1]{Orbanz2011} to the case of Souslin spaces.
	     
	 \begin{proposition}\label{prop:orbanzsouslin1}   Theorem \ref{thm:uniprior}    holds, if $\Xx = \{ pt\}$.
	 \end{proposition}
	 \begin{proof} Assume the  condition  of   Theorem \ref{thm:uniprior} with $\Xx = \{pt\}$.  Let $\mu \in \Pp (\Yy)$  satisfy the conditions of Theorem  \ref{thm:uniprior}.    Let $\overline{\Yy}$ be the completion  of $\Yy$.  Then $\overline \Yy$   and $\Pp (\overline \Yy) $ are Polish spaces.
	 	Let us choose a countable dense set in $\Yy$.  Then   this set is also dense  in $\overline  \Yy$. Thus we  can write   
	 	\begin{align}
	 		\Aa_\Yy = \big(\Aa_{ \overline \Yy}\big)_{| i (\Yy)} \label{eq:restr1}
	 	\end{align}
	 	where $ i: \Yy \to \overline \Yy$ is the continuous inclusion.  Since $i$ is a continuous inclusion, $ i_*: \Pp (\Yy) \to \Pp (\overline \Yy)$ is  a continuous  inclusion in the   weak*-topology.  Then $i_* \mu \in \Pp (\overline \Yy)$. By \cite[Theorem 1.1]{Orbanz2011}, which  is   Proposition \ref{prop:orbanzsouslin1} specialized   for  Polish  label spaces,  there  exists  $\bar \nu \in \Pp^2 (\overline  \Yy)$ such that \eqref{eq:limp2} holds  for any $(\bar A) \in \Hh (\Aa_{\overline \Yy})$. By   the assumption of Theorem \ref{thm:uniprior},  we have  
	\begin{equation}
			\widehat {ev}_{\overline {\Yy}}  (\bar \nu)   =i_*\mu.\label{eq:bary}
	\end{equation}   We claim that    $\bar \nu  \in \Pp^2 \big( i (\Yy)\big)\subset \Pp^2 (\overline  \Yy)$.
	 	
	 	\begin{lemma}\label{lem:outerv}  Set
	 		$$ C \coloneqq \overline \Yy \setminus  \Yy . $$ Then 
	 		\begin{equation}\label{eq:outerv}
	 			\lambda^*  (C) = 0  \text{ for } \overline  \nu\text{-a.e. }  \lambda \in \Pp (\overline \Yy).
	 		\end{equation}
	 	\end{lemma} 
\begin{proof} Assume the contrary. Then there exists $\varepsilon>0$ such that
\[
	 		\overline\nu
	 		\left\{
	 		\lambda\in\Pp(\overline\Yy):
	 		\lambda^*(C)>\varepsilon
	 		\right\}>0.
\]
Set
\[
	 		C(\overline\nu,\varepsilon)
	 		\coloneqq
	 		\left\{
	 		\lambda\in\Pp(\overline\Yy):
	 		\lambda^*(C)>\varepsilon
	 		\right\}.
\]
For every
	 		$\lambda\in C(\overline\nu,\varepsilon)$ and every Borel set
	 		$B\subset\overline\Yy$ satisfying $C\subset B$, we have
	 		\[
	 		\lambda(B)>\varepsilon.
	 		\]
	 		Consequently, for every Borel set $A\subset i(\Yy)$,
	 		\[
	 		C\subset\overline\Yy\setminus A,
	 		\]
	 		and hence
	 		\[
	 		\lambda(A)<1-\varepsilon.
	 		\]
	 		
	 		Since $i(\Yy)$ is analytic and $i_*\mu$ is concentrated on
	 		$i(\Yy)$, there exist increasing Borel sets
	 		\[
	 		A_n\subset i(\Yy)
	 		\]
	 		such that
	 		\[
	 		i_*\mu(A_n)\longrightarrow1.
	 		\]
	 		Using \eqref{eq:bary}
	 		\[
	 		\widehat{ev}_{\overline\Yy}(\overline\nu)=i_*\mu,
	 		\]
	 		we obtain
	 		\[
	 		\lim_{n\to\infty}
	 		\int_{\Pp(\overline\Yy)}
	 		\lambda(A_n)\,d\overline\nu(\lambda)
	 		=
	 		1.
	 		\]
	 		On the other hand,
	 		\[
	 		\begin{aligned}
	 			\int_{\Pp(\overline\Yy)}
	 			\lambda(A_n)\,d\overline\nu(\lambda)
	 			&\le
	 			(1-\varepsilon)
	 			\overline\nu\bigl(C(\overline\nu,\varepsilon)\bigr)\nonumber\\
	 			&\quad+
	 			1-
	 			\overline\nu\bigl(C(\overline\nu,\varepsilon)\bigr)\nonumber\\
	 			&=
	 			1-\varepsilon\,
	 			\overline\nu\bigl(C(\overline\nu,\varepsilon)\bigr)
	 			<1,
	 		\end{aligned}
	 		\]
	 		which is a contradiction.
	 		This  completes  the proof of Lemma \ref{lem:outerv}.
	 	\end{proof}
	 	
	 	{\it Completion of the  proof of Proposition \ref{prop:orbanzsouslin1}.}
	 	
	 	It follows from  Lemma \ref{lem:outerv}  that   for any $\lambda \in  \Pp(\overline \Yy)$ we have  $\lambda_*  (i (\Xx)) =1$  for $\overline \nu$-a.e. $\lambda$.  Therefore  $\overline  \nu \in \Pp^2  \big(i (\Yy)\big)$, what is required to prove.
	 \end{proof}    
	
	\begin{proof} [Proof of  Theorem \ref{thm:uniprior}] (1)    The   equality  \eqref{eq:uniprior}   is a consequence  of the functoriality  $P_*: \Probm  \to \Meas$ that assigns  each measurable  space $\Xx$ to  measurable space  $\Pp (\Xx)$ and
		each  probabilistic morphism  $T \in \Probm  (\Xx, \Yy)$ to a measurable  mapping  $P_* T \in \Meas (\Pp (\Xx), \Pp (\Yy))$ \cite[Theorem  1]{Giry1982}.
		
		The  equality \eqref{eq:limmar}  follows from Lemma \ref{lem:unim} and Remark \ref{rem:uniprior} (2).
		
		Now let us prove the last assertion of Theorem \ref{thm:uniprior}. The ``only if" assertion  is a consequence  of \eqref{eq:uniprior} and \eqref{eq:limmar}.
		
		Now    we assume the    ``if"  condition of Theorem \ref{thm:uniprior}. 
		For each $X_n \in \mathrm{P_{fin}} (\Xx)$, 
		   we consider the projective system  of probability  spaces
			$$\Big\{\Big ( \Pp (\Om^{X_n}_{(A)}), \mathfrak m ^n ( \nu ^{X_n}_{(A)} )\in   \Pp (\Om^{X_n}_{(A)})\Big )\Big \}$$
			together  with   mappings 
			$$P_* \pi^n _{ (A)}: \Pp (\Yy^{X_n}) \to  \Pp (\Om ^{X_n}_{ (A)}), $$
			$$ P_* ^2  \pi^n _{ (A)}: \Pp ^2(\Yy^{X_n}) \to  \Pp^2 (\Om ^{X_n}_{ (A)}). $$
			Taking into  account Proposition \ref{prop:orbanzsouslin1} for $\Pp ^2 (\Yy^{X_n})$, see Remark \ref{rem:uniprior}(1),  we conclude    that  there exists  $\tilde \nu_{X_n} \in \Pp^2 (\Yy ^{X_n})$ such that
			\begin{equation}\label{eq:limn}
				\mathfrak m ^n   (	\nu ^{X_n} _{ (A)}) = P_* ^2  \pi_{ (A)}^{X_n}(\tilde  \nu_{ X_n}).
			\end{equation}
		Now we  consider the following   commutative  system
		\begin{equation}\label{eq:comm3-u}
			\xymatrix{\Pp(\Yy) ^{X_n} \ar[r] ^{\mathfrak m ^n }\ar[d] ^{(\pi_{ (A)})_* ^{X_n}} &  (\Pp (\Yy  ^{X_n}), \tilde \nu _{X_n})\ar[d] ^{P_*  ( \pi_{ (A)}^{X_n})}\\
				\Pp (\Om _{ (A)})^{X_n}\ar[r] ^{ \mathfrak m ^n} & \Pp \big(\Om _{ (A)}^{X_n}, \mathfrak m ^n (\nu^{X_n}_{(A)}) \big).
			}
		\end{equation}	
		\begin{lemma}\label{lem:uniprior}   The  image  $\mathfrak m ^n(\Pp (\Yy )^{X_n})$  is a measurable  subset  of  $\Pp (\Yy ^{\Xx_n})$.
	\end{lemma}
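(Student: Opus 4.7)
The strategy is to present $\mathfrak m^n:\Pp(\Yy)^{X_n}\to \Pp(\Yy^{X_n})$ as a continuous injection between Polish spaces and then apply a classical descriptive-set-theoretic fact that such maps have Borel measurable image.

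First, I record the topological setting. Since $\Yy$ is Polish, the space $\Pp(\Yy)$ equipped with the weak* topology $\tau_w$ is Polish, so $\Pp(\Yy)^{X_n}$ is Polish (as a finite product of Polish spaces). Likewise $\Yy^{X_n}$ is Polish, hence $\Pp(\Yy^{X_n})$ is Polish. Next, I note that $\mathfrak m^n$ is $(\tau_w,\tau_w)$-continuous: this is the standard fact that the formation of product measures $(\mu_1,\ldots,\mu_n)\mapsto \mu_1\otimes\cdots\otimes\mu_n$ is weak*-continuous on Polish factors, and is a strengthening of the measurability of $\mathfrak m^n$ already recorded in \eqref{eq:frakmk}; alternatively one can verify continuity by testing against bounded continuous product functions on $\Yy^{X_n}$ and invoking Stone--Weierstrass.

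Second, I verify injectivity. If $\mathfrak m^n(\mu_1,\ldots,\mu_n)=\mathfrak m^n(\mu_1',\ldots,\mu_n')$, then applying the pushforward along the $i$-th coordinate projection $\Pi_i:\Yy^{X_n}\to\Yy$ to both sides yields $\mu_i=\mu_i'$ for every $i$, since the $i$-th marginal of a product measure is precisely its $i$-th factor. Hence $\mathfrak m^n$ is injective.

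Finally, I invoke the Lusin--Souslin theorem (see any standard reference on descriptive set theory, e.g.\ Kechris): a continuous injection from a Polish space into a Polish space sends Borel sets to Borel sets, and in particular its image is a Borel subset of the target. Applied to $\mathfrak m^n$, this gives that $\mathfrak m^n\bigl(\Pp(\Yy)^{X_n}\bigr)$ is a Borel, hence measurable, subset of $\Pp(\Yy^{X_n})$, which is what is asserted. The only step that is non-elementary is the appeal to Lusin--Souslin; everything else is verification of its hypotheses. No other obstacle is expected, since both the Polish structure and the continuity of product-measure formation are standard.
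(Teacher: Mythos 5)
Your proof is correct, but it takes a genuinely different route from the paper's. The paper argues by exhibiting the image explicitly: fixing a countable algebra $\Aa_\Yy$ generating $\Sigma_\Yy$, it identifies $\mathfrak m^n(\Pp(\Yy)^{X_n})$ with the set of $\mu\in\Pp(\Yy^{X_n})$ satisfying $\mu(A_1\times\cdots\times A_n)=\prod_{i=1}^n\mu(\Yy\times\cdots\times A_i\times\cdots\times\Yy)$ for all $A_i\in\Aa_\Yy$; each such condition is an equality of $\Sigma_w$-measurable functions of $\mu$ (evaluations on fixed sets are measurable by definition of $\Sigma_w$), the identification follows from a $\pi$-system/monotone-class argument since rectangles over $\Aa_\Yy$ generate the product $\sigma$-algebra, and a countable intersection of measurable sets is measurable. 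You instead verify that $\mathfrak m^n$ is a continuous injection between Polish spaces and invoke Lusin--Souslin. Both are sound: your hypotheses check out ($\Pp(\Yy)^{X_n}$ and $\Pp(\Yy^{X_n})$ are Polish, product-measure formation is weak*-continuous, marginals recover the factors, and $\Sigma_w=\Bb(\tau_w)$ in the Polish case so Borel implies measurable). The trade-off is that your argument leans on a nontrivial descriptive-set-theoretic theorem and genuinely needs the Polish structure, whereas the paper's argument is elementary, yields an explicit description of the image (useful for recognizing product measures), and only requires that $\Sigma_\Yy$ be generated by a countable algebra. A small simplification available to you: Lusin--Souslin applies to Borel injections, so the measurability of $\mathfrak m^n$ already recorded in \eqref{eq:frakmk} suffices and the continuity verification could be skipped.
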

\begin{proof}[Proof of Lemma \ref{lem:uniprior}]    Let   $ \Aa_\Yy$ be a countable  algebra  generating $\Sigma _\Yy$.  Then   
		$$ \mathfrak m ^n(\Pp (\Yy )^{X_n}) = \Big\{ \mu  \in    \Pp (\Yy  ^{X_n}):  \mu  (A_1 \times \ldots \times  A_n)=$$
	$$ = \Pi_{ i =1}   ^n \mu(\underbrace{\Yy \times\ldots \times  \Yy}_{(i-1)\, times}\times A_i\times \underbrace{\Yy \times \ldots \times \Yy}_{(n-i)\, times}) \text{ for  any } A_i \in \Aa_\Yy, i \in \overline{1, n}\Big\}$$ 
	is a measurable  subset  of  $\Pp (\Yy ^{\Xx_n})$.
	  This completes  the proof  of Lemma \ref{lem:uniprior}.
	 \end{proof}	 	
{\it Completion of the proof of Theorem \ref{thm:uniprior}.}	Since  the map $\mathfrak m ^n$ is injective,    taking into account    of  Lemma  \ref{lem:uniprior}, we conclude that  
$$\tilde  \nu_{X_n}  = (\mathfrak m ^n)_*  \nu_{X_n}$$
 for some $\nu_{X_n} \in \Pp (\Pp (\Yy) ^{X_n})$. 
			Noting that   the system $\{(\Pp (\Yy) ^{X_n},\nu_{X_n}), X_n \in \mathrm{P_{fin}} (\Xx)\}$ of probability spaces satisfies  the condition of  the   Kolmogorov  extension  theorem, we complete  the proof  of Theorem \ref{thm:uniprior}.
		\end{proof}

		\section{MacEachern's  Dependent  Dirichlet Processes and Bayesian supervised learning}\label{sec:BDDP}

		\subsection{MacEachern's  Dependent  Dirichlet Processes revisited}\label{subs:DDP}
		
		In this subsection,  using Theorem \ref{thm:uniprior}, we revisit MacEachern's Dependent Dirichlet Processes (DDPs) \cite{MacEachern1999} \cite{MacEachern2000} by synthesizing the categorical framework of this paper with the copula-based construction by Barrientos, Jara, and Quintana \cite{BJQ2012}.  As MacEachern  \cite{MacEachern2000} and  Barrientos, Jara, and Quintana \cite{BJQ2012}, we  assume that $\Yy$ is a measurable  subset  of $\R^n$  and $\Xx$ is an arbitrary index  set. 
		
		Let us first recall the general definition of a DDP from \cite[Definition 1]{BJQ2012}. A DDP is   generated   by a map (stochastic  process) 
		$$G: \Om \times  \Xx \to \Pp (\Yy)$$  where $(\Om, P)$ is a  probability space and    for each $x \in \Xx$ the map $G (\cdot, x):  (\Om, P) \to \Pp (\Yy)$ is measurable, or equivalently, the   map 
		$$ \hat G:  (\Om, P) \to \Pp (\Yy) ^\Xx, \hat G (\om)(x): = G(\om, x),$$
		 is measurable. Furthermore, motivated by Sethuraman's work \cite{Sethuraman1994},  $G$ must satisfy the following  condition. For any   $x \in \Xx$ and $B \in \Sigma_\Yy$  we have
		\begin{equation}
			G (\om, x) (B)= \sum_{i=1}^{\infty} W_i(\om,x) \delta_{\theta_i(\om,x)}(B), \, \text{ for   $P$-a.e. }\; \om \in \Om
		\end{equation}
		where   for   all $x\in \Xx$ and $P$-a.e.  \; $\om \in \Om$ 
		$$W_i(\om, x) = V_i(\om, x) \prod_{j<i} (1-V_j(\om,x)),$$   with  $V_i$ and $\theta_i: \Om \times \Xx \to \Yy$   described below.
		\begin{enumerate}
			\item $\{V_i:  \Om \times \Xx   \to  [0,1]\}_{i=1}^\infty$     where for  each $x\in \Xx$ the     sequence $\{V_i  (\cdot, x) : \Om   \to  [0,1]\}_{i=1}^\infty$   are  i.i.d.    such that   for any $i$ 
			$$\big (V_i (\cdot, x)\big)_* P = \text{Beta}(1, \alpha (x)) \in \Pp([0,1]) \text{ where } \alpha (x) \in \R_{>0} .$$ 
			The dependence structure of $V_i (\cdot , x)$ across $x \in \Xx$ for each $i$ is determined by a family  $\Cc_\Xx^V: =\{ C^V_{x_1, \ldots, x_d}: [0,1]^d \to [0,1]\}$ of copula  functions describing     finite dimensional   CDF of  $(\hat V_i )_* P\in \Pp ([0,1]^\Xx)$  where $\hat V_i:  \Om \to [0,1]^\Xx, \hat  V _i  (\om)(x) : = V_i (\om, x) $.
			\item $\{\theta_i:  \Om \times \Xx \to \Yy\}_{i=1}^\infty$  where    for each $x$  the sequence   $\{ \theta_i(\cdot, x):\Om \to  \Yy\}_{i =1} ^\infty$ are i.i.d.  such that for any $i$
			$$\big (\theta_i  (\cdot, x)\big)_* P =  G^0_x \in \Pp  (\Yy). $$
			 The dependence structure of $\theta _i (\cdot, x)$ across $x \in \Xx$ for each $i$ is determined by a family  $\Cc_\Xx^\theta:= \{ C^\theta_{x_1, \ldots, x_d}: [0,1]^d \to [0,1] \}$ of copula functions  describing  finite dimenional CDF of $(\hat \theta_i)_*P \in \Pp (\Yy ^\Xx)$ where $\hat \theta_i: \Om \to \Yy^\Xx$, $\hat \theta_i(\om) (x): = \theta_i (\om, x)$.
		\end{enumerate}
		 We denote the induced probability measure $(\hat  G) _* P \in \Pp (\Pp (\Yy)^\Xx)$     by $\Dd \Dd\Pp(\alpha_\Xx\in \R_{>0} ^\Xx, \Cc_\Xx^\theta, \Cc_\Xx^V, G^0_\Xx\in \Pp (\Yy)^\Xx)$.
   In fact, $(G, P)$ can   be  chosen  as $ \big((\Pp  (\Yy) ^\Xx,   \Dd\Dd \Pp (\alpha_\Xx,\\
     \Cc  ^\theta_\Xx, C^V_\Xx, G^0_\Xx)\big)$  and   $G$ is  defined to be the natural  evaluation mapping: $G (\om, x) (B)  : = \om(x) (B)$ for any $\om \in \Pp  (\Yy) ^\Xx, x\in \Xx$ and $B \in \Sigma_\Yy$.
     
		According to Theorem \ref{thm:uniprior}, the probability measure  $\Dd\Dd\Pp(\alpha_\Xx, \Cc_\Xx^\theta, \Cc_\Xx^V, G^0_\Xx)$  is uniquely determined by the projective system of its finite-dimensional projections. Let us describe this system.
		For any finite set of predictors $X_m = \{x_1, \dots, x_m\} \subset \Xx$ and any finite measurable partition $(A) = (A_1, \dots, A_k)$ of $\Yy$, the corresponding projection is the probability measure
		$$ \nu_{(A)}^{X_m} := P^2_*\pi_{(A)}^{X_m} (R^\Xx_{X_m}) _* \Dd\Dd\Pp(\alpha_\Xx, \Cc_\Xx^\theta, \Cc_\Xx^V, G^0_\Xx) \in \Pp(\Pp(\Om_k)^{X_m}). $$
		More explicitly, let
		$$ \pb_{x_i}: = \big(G  (\cdot , x_i)(A_1), \ldots, G (\cdot, x_i)(A_k)):  \Om \to  \Delta _k : = \Pp (\Om_k). $$
		Then 
		$$\nu_{(A)}^{X_m} =  (\pb_{x_1}, \ldots, \pb_{x_m})_*P \in \Pp(\Pp(\Om_k)^{X_m}).$$
		
		The structure of this  probability measure $ \nu_{(A)}^{X_m}$ is as follows:
		\begin{itemize}
			\item For any fixed $x_i \in X_m$, the marginal distribution $(\pb_{x_i})_* P\in \Pp   ^2  (\Om_k)$ is a Dirichlet distribution, as $\big(G  (\cdot, x_i)\big)_*P\in \Pp^2 (\Yy)$ is  a Dirichlet process. Specifically,
			$$ (\pb_{x_i})_*  P = \text{Dir}(\alpha (x_i) G^0 (x_i)(A_1), \dots, \alpha (x_i) G^0 (x_i)(A_k)). $$
			\item The crucial point is that the joint distribution $\nu_{(A)}^{X_m}\in \Pp(\Pp(\Om_k)^{X_m})$ is not a simple product of these marginal Dirichlet distributions  $(\pb_{x_i})_* P\in \Pp   ^2  (\Om_k)$. The dependence between $\pb_{x_i}$ and $\pb_{x_j}$ for $i \neq j$ is induced by the dependence structure of the underlying stick-breaking processes $\{V_l(x)\}_{l=1}^\infty$ and $\{\theta_l(x)\}_{l=1}^\infty$. This dependence is precisely what is encoded by the copula families $\Cc_\Xx^V$ and $\Cc_\Xx^\theta$.
		\end{itemize}
		Furthermore, Theorem \ref{thm:uniprior} provides a consistency condition involving a ``mean" measure $\mu \in \Pp(\Yy^\Xx)$. For the  $\Dd\Dd\Pp$, this corresponds to the map of base measures $G^0_\Xx: \Xx \to \Pp(\Yy)$ defined by $x \mapsto G^0_x$, regarded  as an element in $\Pp (\Yy) ^\Xx \stackrel{\mathfrak m ^\Xx}{\INTO} \Pp (\Yy ^\Xx)$. The $\Dd\Dd\Pp$ is centered around this collection of measures, as $\E_P[G_x] = G^0_x$. The projective system $\{\nu_{(A)}^{X_m}, X_m \in (\mathrm{P_{fin}} (\Xx), \le)\}$ must satisfy the condition \eqref{eq:orbanzm}:
		$$ \widehat{ev}_{\Om_k}^{(m)}(\nu_{(A)}^{X_m}) = P_*\pi^{X_m}_{(A)}\circ  (r^\Xx_{X_m})_* (\mathfrak m ^\Xx G^0_\Xx). $$
		This simply states that the expected value of the random vector $\pb_{x_i}$ is the vector of probabilities of the base measure, $(G^0_{x_i}(A_1), \dots, G^0_{x_i}(A_k))$, which is a fundamental property of the Dirichlet process \cite[\S 4.1.4]{GV2017}.
		
		In summary, we can characterize MacEachern's $\Dd\Dd\Pp$ in the following way:
		\begin{theorem}\label{thm:ddp}
			The law of a Dependent Dirichlet Process, $\Dd\Dd\Pp(\alpha_\Xx, \Cc_\Xx^\theta, \Cc_\Xx^V, G^0_\Xx)$, is the unique probability measure $\nu \in \Pp(\Pp(\Yy)^\Xx)$ that satisfies the two conditions of Theorem \ref{thm:uniprior}, where:
			\begin{enumerate}
				\item The projective system of second-order probability measures $\{\nu_{(A)}^{X_m}\}$ is defined such that each $\nu_{(A)}^{X_m}$ is the law of an $m$-tuple of  measurable  mappings $\{\pb_{ x_i}: \Om \to \Pp (\Om_k)\}_{ i=1} ^m$, where the marginal law  $ (\pb_{ x_i})_* P  \in \Pp^2 (\Om_k)$ is Dirichlet, and their joint dependence structure is determined by the copula families $\Cc_\Xx^V$ and $\Cc_\Xx^\theta$.
				\item The corresponding projective system of first-order measures is given by the base measure map $\mu =\mathfrak m ^\Xx \circ G^0_\Xx \in \Pp (\Yy ^\Xx)$.
			\end{enumerate}
		\end{theorem}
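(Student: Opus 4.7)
The plan is to apply Theorem \ref{thm:uniprior} to the measure $\nu := (\hat G)_* P \in \Pp(\Pp(\Yy)^\Xx)$ obtained from the stick-breaking construction recalled at the start of Section \ref{subs:DDP}. Existence of $\nu$ is immediate from measurability of $\hat G$; the substance of the proof lies in identifying its projective data and invoking the uniqueness clause of Theorem \ref{thm:uniprior}.

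First, for each $X_m \in \mathrm{P_{fin}}(\Xx)$ and each $(A) \in \Hh(\Aa_\Yy)$, I would compute
\begin{equation*}
\nu^{X_m}_{(A)} := P^2_* \pi_{(A)}^{X_m} (R^\Xx_{X_m})_* \nu
\end{equation*}
using functoriality of $P_*$ from \eqref{eq:giryf} together with \eqref{eq:Mhomomorphism}. Since $R^\Xx_{X_m} \circ \hat G$ yields the tuple $(G(\cdot, x_1), \ldots, G(\cdot, x_m))$ and further pushforward by $(\pi_{(A)})_*$ in each coordinate produces the vectors $\pb_{x_i}$, this identifies $\nu^{X_m}_{(A)}$ with the law of $(\pb_{x_1}, \ldots, \pb_{x_m})$ as a random element of $\Pp(\Om_{(A)})^{X_m}$, establishing the form asserted in item (1) of the theorem.

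Second, I would verify the claimed marginal structure. By Sethuraman's representation \cite{Sethuraman1994}, the marginal law $(G(\cdot, x_i))_* P \in \Pp^2(\Yy)$ is the Dirichlet process with parameter $\alpha(x_i) G^0_{x_i}$, so by the pushforward identity used in Example \ref{ex:dirichleti}, $(\pb_{x_i})_* P$ is the finite-dimensional Dirichlet distribution with parameters $\alpha(x_i) G^0_{x_i}(A_j)$, $j \in \overline{1,k}$. The dependence between $\pb_{x_i}$ and $\pb_{x_j}$ is governed precisely by the copula families $\Cc^V_\Xx$ and $\Cc^\theta_\Xx$, which control the finite-dimensional joint laws of $\{V_l(\cdot, x)\}_{x \in X_m}$ and $\{\theta_l(\cdot, x)\}_{x \in X_m}$ at every stick-breaking level $l$.

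Third, I would check the projective compatibility of $\{\nu^{X_m}_{(A)}\}$ under both partition refinement $(B) \le (A)$ and index restriction $X_m \le X_n$, together with the anchor condition \eqref{eq:orbanzm} for $\mu := \mathfrak m^\Xx \circ G^0_\Xx$, via the Dirichlet mean identity $\E_P[G(\cdot, x)(C)] = G^0_x(C)$ and a Fubini argument across the stick-breaking expansion. Uniqueness of $\nu$ among all laws on $\Pp(\Yy)^\Xx$ satisfying both conditions then follows directly from the final assertion of Theorem \ref{thm:uniprior}, invoking Lemma \ref{lem:inverse} to ensure that the finite-dimensional cylinders generate $\Sigma_{\Pp(\Yy)^\Xx}$. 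The hard part is expected to be the verification of the anchor condition together with compatibility under partition refinement, since one must confirm that the copula-driven coupling of stick-breaking atoms and weights is preserved under coarsening of partitions in $\Hh(\Aa_\Yy)$; this requires careful tracking of how the sub-$\sigma$-algebra generated by $\pi_{(A)}$ interacts with $\{\theta_l(\cdot, x)\}$ and $\{W_l(\cdot, x)\}$, and is precisely where the Barrientos--Jara--Quintana \cite{BJQ2012} copula construction provides the needed structural input.
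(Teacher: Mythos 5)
Your proposal is correct and follows essentially the same route as the paper, whose own argument is the discussion preceding the theorem: realize $\nu=(\hat G)_*P$, identify $\nu^{X_m}_{(A)}=P^2_*\pi^{X_m}_{(A)}(R^\Xx_{X_m})_*\nu$ as the joint law of $(\pb_{x_1},\ldots,\pb_{x_m})$ with Dirichlet marginals and copula-encoded dependence, verify the anchor condition \eqref{eq:orbanzm} from $\E_P[G_x]=G^0_x$, and conclude uniqueness from \eqref{eq:uniprior}. The only slight mislocation of effort is your worry about compatibility under partition refinement: since $\nu$ is already given as a pushforward, that compatibility is automatic from $\pi_{(B)}=\pi^{(A)}_{(B)}\circ\pi_{(A)}$ and functoriality of $P_*$, so no tracking of the stick-breaking structure is needed there.
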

		 The same reasoning can be applied to characterize the simpler ``single-weights" and ``single-atoms" $\Dd\Dd\Pp$s from \cite[Definitions 2, 3]{BJQ2012} by appropriately simplifying the copula structure (e.g., using independence copulas for the weights in the single-weights case).
		
\subsection[Posterior predictive distributions for DDP priors]{Posterior  predictive  distributions  of  Bayesian supervised learning models $ \big(\Pp(\Yy) ^\Xx, \Dd\Dd\Pp, \Id_{\Pp (\Yy) ^\Xx}, \big(\Pp(\Yy) ^\Xx \big)$}

  Let  us consider  a Bayesian  supervised  learning  model 	 $ \big(\Pp(\Yy) ^\Xx, \Dd\Dd\Pp, \Id_{\Pp (\Yy) ^\Xx},
   \Pp(\Yy) ^\Xx )$  where  $\Dd\Dd\Pp = \Dd\Dd\Pp (\alpha_\Xx, \Cc^\theta_\Xx, \Cc^V_\Xx,  G^0_\Xx)$  is described  in  Subsection \ref{subs:DDP}. In particular, $\Yy $ is a measurable subset in $\R^k$.  To compute 
    the posterior  predictive distribution $\Pp_{T_m|S_n, \Dd\Dd\Pp}$   we restrict $\Dd\Dd\Pp $ to $\Pp (\Yy) ^ A$ where 
  $ A = [T_m] \cup  [\Pi_\Xx (S_n)] $.   The  restriction  $(R^\Xx_{A})_*\Dd\Dd\Pp$  of $\Dd\Dd\Pp$ to $\Pp (\Yy)^A$  is      $\Dd\Dd \Pp(\alpha_A,  \Cc ^\theta_A,  \Cc^V _A,  G^0_A)$ where  
    $$ \alpha _A  =  (\alpha_\Xx)_{ | A},  \Cc^\theta_A = (\Cc ^\theta_\Xx)_{| A}, 
    \Cc^V_A = (\Cc ^V_\Xx)_{| A}, G^0_A= (G^0_\Xx)_{|A}.$$
 Next, we shall apply Theorem \ref{thm:posspred} to compute
 $\Pp_{T_m\mid S_n,\Dd\Dd\Pp}$, where
 $T_m=(t_1,\ldots,t_m)$ and
 $\Pi_\Xx(S_n)=(x_1,\ldots,x_n)$. Put
 \[
 A=[T_m]\cup[\Pi_\Xx(S_n)].
 \]
 Then
 \[
 \mu^0_{T_m,S_n,\Dd\Dd\Pp}
 =
 \int_{\Pp(\Yy)^A}
 \left(
 \bigotimes_{i=1}^m h(t_i)
 \otimes
 \bigotimes_{j=1}^n h(x_j)
 \right)
 d(R^\Xx_A)_*\Dd\Dd\Pp(h).
 \]
 Thus
 \[
 \mu^0_{T_m,S_n,\Dd\Dd\Pp}\in \Pp(\Yy^{m+n}).
 \]
 If $\Yy\subset \R^k$, then
 $\mu^0_{T_m,S_n,\Dd\Dd\Pp}$ may be regarded as an element of
 $\Pp((\R^k)^{m+n})$.
 
 The one-coordinate marginals of $\mu^0_{T_m,S_n,\Dd\Dd\Pp}$ are given by
 \[
 \mu^0_{t_i}
 =
 \int_{\Pp(\Yy)^A} h(t_i)\,
 d(R^\Xx_A)_*\Dd\Dd\Pp(h),
 \qquad i=1,\ldots,m,
 \]
 and
 \[
 \mu^0_{x_j}
 =
 \int_{\Pp(\Yy)^A} h(x_j)\,
 d(R^\Xx_A)_*\Dd\Dd\Pp(h),
 \qquad j=1,\ldots,n.
 \]
 Consequently, to determine the joint measure
 $\mu^0_{T_m,S_n,\Dd\Dd\Pp}$ it remains to determine the corresponding
 copula
 \[
 C_{T_m,S_n,(R^\Xx_A)_*\Dd\Dd\Pp}.
 \]
 Knowing this copula and the above marginals, we can apply
 Theorem 3.4.1 in \cite{DS2016} to compute
 $\Pp_{T_m\mid S_n,\Dd\Dd\Pp}$, using the recursive formula in
 Theorem \ref{thm:posspred}.

 Alternatively, to compute  the  posterior  predictive   distribution $\Dd\Dd\Pp_{ T_m| S_n, \Dd\Dd\Pp}$, we may apply   Theorem \ref{thm:uniprior}  and    Theorem \ref{thm:projbi}, or the following Theorem. 
 
 \begin{theorem}\label{thm:pospredlim}  Let $\Yy$ be a  measurable  space,  $\Xx: = \{ x_1, \ldots, x_n\}$  a finite set, $S_n \in (\Xx\times \Yy)^n$,   and  $X_n = \Pi_{ \Xx}(S_n) \in \Xx^n$,  $T_m \in \Xx ^m$. Assume that for any $(A)\in \pi (\Yy)$  there  exists a   Markov kernel $\qb ^n_{  (A), m}: \Om_{(A)}^n \to \Pp (\Om_{(A)}^m)$  which is a   regular  conditional  probability measure of the joint  distribution  of  $\pb^{(A)}_{T_m, X_n} ( P_*  (\pi_{(A)})_* ^\Xx \mu)$  such that 
 the following diagram is commutative for any $(A) \le (B) \in \pi (\Yy)$.		
 	$$ 
 	\xymatrix{\Pp (\Om_{(B)}^m)  \ar[d]_{ P_*(\pi^{(B)}_{ (A)})^m} & & \ar[ll]_{\qb_{(B), m}^{n}} \Om_{ (B)} ^n  \ar[d]^{ ( \pi^{(B)}_{ (A)})^n}\\
 		 \Pp (\Om_{(A)}^m) &  &\ar[ll]_{\qb_{(A), m}^{n}}\Om_{ (A)} ^n 	.
 	}
 	$$
 	Assume  that there  exists  a  Markov kernel $\qb^n_m : \Yy^n \to \Pp (\Yy^m)$ such that   for any  $ (A) \in \pi (\Yy)$ the following  diagram is commutative
 	$$ 
 	\xymatrix{\Pp (\Yy^m )\ar[d]_{ P_*(\pi_{ (A)})^m} & & \ar[ll]_{\qb^n_m} \Yy ^n  \ar[d]^{(\pi_{ (A)})^n}\\
 		\Pp (\Om_{(A)}^m) & & \ar[ll]_{\qb ^n_{(A),m }}\Om_{ (A)} ^n	.
 	}
 	$$
 	Then    $\qb ^n_m (\Pi_\Yy  (S_n))= \Pp_{ T_m|S_n, \mu}$.
 	\end{theorem}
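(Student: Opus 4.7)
The plan is to recognize $\qb^n_m$ as a regular conditional probability measure and appeal to Theorem \ref{thm:posspred}(1). By that result, the identity $\qb^n_m(\Pi_\Yy(S_n)) = \Pp_{T_m|S_n,\mu}$ (in the weak sense, up to a null set) will follow once I establish that $\qb^n_m \colon \Yy^n \to \Pp(\Yy^m)$ is a regular conditional probability measure for the joint distribution $\mu^0_{T_m, S_n, \mu} := (\underline{\mathfrak{m}^2(\pb_{T_m}, \pb_{X_n})})_*\mu \in \Pp(\Yy^m\times\Yy^n)$ with respect to the projection $\Pi_{\Yy^n}\colon \Yy^m \times \Yy^n \to \Yy^n$. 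Concretely, I must verify
$$\mu^0_{T_m, S_n, \mu}(B \times C) = \int_C \qb^n_m(y)(B)\, d(\Pi_{\Yy^n})_*\mu^0_{T_m, S_n, \mu}(y)$$
for all $B \in \Sigma_{\Yy^m}$ and $C \in \Sigma_{\Yy^n}$.

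My next step is a generating-class reduction. It suffices to check the displayed identity on rectangles of the form $B = ((\pi_{(A)})^m)^{-1}(\tilde B)$, $C = ((\pi_{(A)})^n)^{-1}(\tilde C)$ coming from a common finite measurable partition $(A) \in \pi(\Yy)$, with $\tilde B \subset \Om_{(A)}^m$ and $\tilde C \subset \Om_{(A)}^n$. Since $\pi(\Yy)$ is directed under refinement, the union over all $(A)$ of the $\sigma$-algebras generated by these cylinders exhausts $\Sigma_{\Yy^m} \otimes \Sigma_{\Yy^n}$, so a standard $\pi$-$\lambda$ / monotone class argument (parallel to the use of Lemma \ref{lem:inverse} in the proof of Theorem \ref{thm:projbi}) reduces the verification to the case of such partition cylinders.

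For fixed $(A)$, I push the identity forward by $(\pi_{(A)})^{m+n} \colon \Yy^{m+n} \to \Om_{(A)}^{m+n}$. Using the composition formula \eqref{eq:giryf} together with the intertwining of $\mathfrak{m}^k$ and the induced partition projection (the direct analogue of diagram \eqref{eq:proj0} for $k=m$ and $k=n$), the pushforward $(\pi_{(A)})^{m+n}_* \mu^0_{T_m, S_n, \mu}$ coincides with the partition-level joint distribution associated with $\pb^{(A)}_{T_m, X_n}$ and $P_*(\pi_{(A)})^\Xx_*\mu$. By hypothesis, $\qb^n_{(A), m}$ is a regular conditional probability measure for that partition-level distribution with respect to the $\Om_{(A)}^n$-projection, so the desired integral identity holds at the partition level with $(\qb^n_m, B, C)$ replaced by $(\qb^n_{(A), m}, \tilde B, \tilde C)$. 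The second commutativity hypothesis, namely $P_*(\pi_{(A)})^m \circ \qb^n_m = \qb^n_{(A), m} \circ (\pi_{(A)})^n$, then transports this partition-level identity back to the $\Yy$-level, completing the verification on the chosen cylinder.

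The main obstacle I anticipate is the careful bookkeeping in the pushforward step: one must check that both factors $\pb_{T_m}$ and $\pb_{X_n}$ are simultaneously compatible with the partition projections, so that $(\pi_{(A)})^{m+n}_* \mu^0_{T_m, S_n, \mu}$ is truly the partition-level analogue $(\underline{\pb^{(A)}_{T_m, X_n}})_*(P_*(\pi_{(A)})^\Xx_*\mu)$. This reduces to iterating the intertwining $(\pi_{(A)})^k_* \circ \mathfrak{m}^k = \mathfrak{m}^k \circ ((\pi_{(A)})_*)^k$ through the product structure of $\mathfrak{m}^2(\pb_{T_m}, \pb_{X_n})$, which is routine but delicate. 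Once this compatibility is established, the remainder of the argument is the same monotone class wrap-up already employed in Theorem \ref{thm:projbi}.
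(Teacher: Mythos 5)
Your proposal is correct and follows the same route the paper intends: the paper omits the proof, stating only that it is proved "in the same way as Theorem \ref{thm:projbi}", and your argument is precisely that template — reduce to the defining disintegration identity (here via Theorem \ref{thm:posspred}(1) rather than the graph equation \eqref{eq:bayesinv0}), verify it on partition cylinders using the intertwining of $\mathfrak m^k$ with $(\pi_{(A)})_*$ and the partition-level hypothesis, and conclude by directedness of $\pi(\Yy)$ and a monotone class argument. The only caveat, which you already flag, is that the conclusion holds $(\underline{\pb_{X_n}})_*\mu$-a.s.\ in the weak sense, consistent with Remark \ref{rem:uniqueness}(1).
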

 	
 	This Theorem  is proved in the same  way  as    Theorem \ref{thm:projbi}, so we omit its proof.
 s
\section{Final remarks}\label{sec:fin}
\begin{enumerate}
	\item In this  paper  we   proved  that batch  Bayesian learning  equals  Bayesian online learning  under the assumption of conditionally independent   data,  making  Bayesian learning more    efficient   in the presence  of complex  data.    While the sequential nature of Bayesian updating has long been recognized - most explicitly in the Kalman filter for linear Gaussian models and in conjugate exponential families -previous formulations relied on the existence of probability densities (dominated models) or specific algebraic structures (conjugate priors). Theorem \ref{thm:binv} shows that the equivalence of batch and online learning follows fundamentally from the categorical structure of probabilistic morphisms and conditional independence, without requiring these additional assumptions.  
\item  Bayesian regression learning  with corrupted measurements can be extended  to   nonlinear spaces $\Yy$ where  we can   model    measurement  error using probability  measures, e.g., for   homogeneous  Riemannian  manifolds $\Yy$.    Corollary \ref{cor:psspred} can be extended  for    Bayesian  regression learning  with  corrupted measurements  in the same way.

\item    It     is important  to     find a   suitable  concept   of  predictive  consistency  of  Bayesian  supervised  learning which would agrees  with the classical concept     and   the concept in a recent work  by
 P. Koerpernik  and F. Pfaff.  A possible solution is to  introduce the notion of predictive consistency   at  a finite subset  $A \subset \Xx$  and consider    posterior  predictive distributions of the form  $\Pp_{T_m| S_n, \mu}$  where $[T_m] \subset A$   and  $[\Pi_\Xx (S_n)]\in A$, moreover, $\Pi_\Xx (S_n)$ visits  each   element of $A$ infinitely many times.  If $\# \Xx = 1$   and  the  sampling  operator  is Markov kernel   this  concept  is the    notion  of posterior   consistency in  classical  Bayesian  statistics \cite[\S 6.8.3]{GV2017}.  Furthermore, this notion also  agrees  with  the   condition  of  recurrent  density  entered in the  concept  of   posterior  consistency of Gaussian process  regressions  in      \cite{KP2021}. 
\end{enumerate}

\section*{Acknowledgement}
		Research of HVL was supported by the Institute of Mathematics,  Czech Academy of Sciences (RVO: 67985840).  The author  would like to thank Steven  MacEachern for   suggesting the paper by  Barrientos-Jara-Quintana to  her.  She  also thanks Xuan Long Nguyen for inviting her to the workshop  ``Bayesian Modeling, Computation,
		and Applications"  at Ho Chi Minh City in July 2025, which  stimulated  her  working  over this  paper.


\begin{thebibliography}{99999}

\bibitem[BJQ2012]{BJQ2012}   A.F. Barrientos,  A. Jara,  and F. A. Quintana. On the Support of MacEachern's Dependent Dirichlet Processes and Extensions. Bayesian Analysis	7,  2, pp. 277-310  (2012)
			

\bibitem[Bochner1955]{Bochner1955} S. Bochner.  Harmonic Analysis and the  Theory of Probability. University of California  Press.
			
			
\bibitem[Bogachev2007]{Bogachev2007}   V.I. Bogachev.  Measure Theory, vol. I,   II. Springer (2007) 
			
\bibitem[Bourbaki2004]{Bourbaki2004}   N.  Bourbaki.  Elements of Mathematics, Integration II, Chapters 7-9, Springer (2004)
		
			
			
\bibitem[Chentsov72]{Chentsov72} N. Chentsov.  Statistical decision rules and optimal inference, Nauka: Moscow,  Russia   (1972).   English translation  in:  Translations of Mathematical Monograph vol. 53, Amer. Math. Soc.:  Providence, RI, USA   (1982)

\bibitem[DNSZ2023]{DNSZ2023} D. D\~ung, V.K. Nguyen,  C. Schwab, J. Zech, 
Analyticity and Sparsity in Uncertainty Quantification for PDEs with Gaussian Random Field Inputs, Springer (2003)

\bibitem[DM1978]{DM1978}  C. Dellacherie,  P.-A. Meyer. Probabilities and Potential. North Holland (1978)



\bibitem[DS2016]{DS2016}  F. Durante,   C.  Sempi. Principles of Copula Theory, CRC Press, Taylor $\&$ Francis Group,   (2016)

		
	
\bibitem[Giry82]{Giry1982} M. Giry. A categorical approach to probability theory. In: B. Banaschewski, editor, Categorical Aspects of Topology and Analysis,  Lecture Notes in Mathematics, vol.  915, 68-85, Springer (1982)

\bibitem[GV2017]{GV2017} S. Ghosal, A. van der Vaart. Fundamentals of nonparametric Bayesian inference, Cambridge Series in
Statistical and Probabilistic Mathematics, vol. 44. (Cambridge University Press, Cambridge, (2017)


		
\bibitem[JLT21]{JLT2021} J. Jost, H. V. L\^e,  and  T. D. Tran.	Probabilistic morphisms and Bayesian nonparametrics. \href{https://doi.org/10.1140/epjp/s13360-021-01427-7}{Eur. Phys. J. Plus 136, 441 (2021)}.
			

\bibitem[KP2021]{KP2021}  P. Koerpernik, F. Pfaff.  Consistency of Gaussian Process Regression in Metric  Spaces.  Journal of Machine Learning Research 22, 1-27 (2021)
			
\bibitem[Lawvere62]{Lawvere1962} W.F. Lawvere.  The category of probabilistic mappings. Available at  https://ncatlab.org/nlab/files/lawvereprobability1962.pdf., (1962)
			
\bibitem[Le2025]{Le2025} H. V. L\^e.  Probabilistic morphism  and Bayesian supervised  learning, Mat. Sbornik 216, Nr 5, pp. 161-180 (2025), English translation in Sbornik: Mathematics 216:5 723-741.

\bibitem[LS1972]{LS1972}  D. V. Lindley,  A. F. M. Smith, Bayes estimates for the linear model. J. Roy. Statist. Soc. Ser. B 34, 1-41 (1972)


\bibitem[MacEachern1999]{MacEachern1999} S. N. MacEachern.  Dependent nonparametric processes, In ASA Proceedings of the Section on Bayesian Statistical Science, Alexandria, VA. American Statistical Association. 278, 279 (1999)

\bibitem[MacEachern2000]{MacEachern2000} S. N. MacEachern. Dependent Dirichlet processes.Technical report, Department of Statistics, The Ohio State University (2000)
			
			
\bibitem[Orbanz2011]{Orbanz2011} P. Orbanz. Projective limit random probabilities on Polish spaces,  Electronic Journal of Statistics, vol. 5, 1354--1373,  (2011)


\bibitem[Rao2002]{Rao2002}    C.R. Rao. Linear Statistical Inference
and Its Applications.
2nd Edition, John Wiley $\&$ Sons (2002)

\bibitem[RW2006]{RW2006} C.E. Rasmussen,   C.K.I. Williams. Gaussian Processes for Machine Learning, the MIT Press (2006)

		
\bibitem[Sethuraman1994]{Sethuraman1994} J. Sethuraman.  A constructive definition of Dirichlet priors,  Statistica Sinica 4, 2 (1994), 639--650.



\bibitem[SS2023]{SS2023} S. S\"arkk\"a, L. Svensson. Bayesian Filtering and Smoothing, Second Edition, Cambridge University Press, (2023)


\bibitem[Stein1999]{Stein1999}   M.L.  Stein,
Interpolation of Spatial Data, Springer (1999)


\bibitem[Wald1947]{Wald1947} A. Wald, Sequential Analysis. New York: John Wiley and Sons. (1947) Reprinted, Dover Publications, 1973.


	
\end{thebibliography}
	\end{document}